\numberwithin{equation}{section}
\theoremstyle{plain}
  \newtheorem{lem}{Lemma}[section]
  \newtheorem{prop}[lem]{Proposition}
  \newtheorem{thm}[lem]{Theorem}
\theoremstyle{definition}
  \newtheorem{defi}[lem]{Definition}
  \theoremstyle{remark}
  \newtheorem{rem}{Remark}
\renewcommand{\theenumi}{\roman{enumi}}
\renewcommand{\labelenumi}{(\theenumi)}
\newcommand{\If}{\infty}
\newcommand{\op}{\operatorname}
\newcommand{\lra}{\longrightarrow}
\newcommand{\xra}{\xrightarrow}
\renewcommand{\ker}{\operatorname{Ker}}
\newcommand{\aut}{\operatorname{Aut}}
\newcommand{\ee}{\epsilon}
\renewcommand{\hom}{\operatorname{Hom}}
\newcommand{\BZN}{B\Bbb{Z}/p^{[2p^n-1]}}
\newcommand{\BZ}{B(\Bbb{Z}/p)}
\newcommand{\CP}{\Bbb{C}P}
\newcommand{\CPIf}{\Bbb{C}P^{\infty}}
\newcommand{\CPN}{\Bbb{C}P^{p^n-1}}
\newcommand{\Ftwo}{\Bbb{F}_2}
\newcommand{\FP}{\Bbb{F}_p}
\newcommand{\OT}{\otimes}
\newcommand{\HOT}{\hat{\otimes}}
\newcommand{\WH}{\widehat}
\newcommand{\invlim}{\varprojlim}
\newcommand{\KN}{K(n)_*}
\begin{document}
\title[$K(n)_* (K(n)) $ and
the fiber product of  automorphism groups]{The stable cooperations
  of Morava $K$-Theory and
the fiber product of  automorphism groups of formal group laws}
\author[M.Inoue]{Masateru INOUE}
\address{Department of Applied Mathematics,
  Faculty of Science,
  Okayama University of Science,
  1-1 Ridaicho, Kita-ku, Okayama-shi 700-0005, Japan}
\email{inoue@ous.ac.jp}
\date{\today}
\subjclass{Primary 55N22; Secondary  55S25}
\keywords{Morava $K$-theory, Steenrod algebra, stable cooperations,
Hopf algebras, formal group laws, Boardman's stable comodule algebras.}
\maketitle

\begin{abstract}
   There are many previous studies on the Hopf algebra $K(n)_*(K(n))$,
  the stable cooperations 
  of $n$th Morava $K$-theory at an odd prime.
  Whereas the main part of $K(n)_*(K(n))$  corepresents
  the group-valued functor consisting of  strict automorphisms 
  of the Honda  formal group law of height $n$,
  relations between
    the whole structure of $K(n)_*(K(n))$ including the exterior part
  and formal group laws   have not been investigated well.
  Firstly, we constitute a functor $C(-)$ which is given by
  the fiber product of
  two natural homomorphism between
 subgroups of  automorphisms  of   formal group laws,
  and the Hopf algebra $C_*$  corepresenting $C(-)$.
  Next, we  construct a Hopf algebra homomorphism $\kappa^*:C_*\to K(n)_*(K(n))$
  naturally.
  To relate $C_*$ to $K(n)_*(K(n))$, we use  stable comodule algebras
  which are introduced  by Boardman.
  From the algebra structure of $K(n)_*(K(n))$ which is given
  by W\"urgler and Yagita,   we see that $\kappa^*$ is an isomorphism.
  Since we formulate  $C_*$ by using  formal group laws,
  the isomorphism $\kappa^*$ clarifies  relationship between
  the Hopf algebra structure of $K(n)_*(K(n))$ including
  the exterior algebra part and formal group laws.
\end{abstract}

\section{Introduction}
Throughout this paper, we fix an odd prime $p$.
Let $K(n)_*(K(n))$ be the stable cooperations
of $n$th Morava $K$-theory $K(n)$ at $p$.
For a spectrum $E$ and a space $X$, we assign the degree $n$
to  cohomology classes in  $E^n(X)$.
Hence homology  classes in $E_n(X)$ are of degree $-n$.
The coefficient ring of $K(n)$ is the graded field $K(n)_*= \FP [v_n,v_n^{-1}] $,
where $\deg v_n=-2(p^n-1)$.
Any algebra with grading is $\Bbb{Z}$-graded commutative.
We denote  tensor products and  completed tensor products over $K(n)_*$
by  $\OT$ and $\hat{\OT}$ respectively, except in Section \ref{sec:boardman}

Our aim is to describe the Hopf algebra structure of
$K(n)_*(K(n))$ in terms of formal group laws in a similar way to \cite{I,I2}.
W\"urgler \cite{W2} and Yagita \cite{Y1}  determined the Hopf algebra $K(n)_*(K(n))$.
As a $K(n)_*$-algebra, we have
\begin{equation}
  \label{eq:kncoop}
K(n)_*(K(n))\cong
\Sigma(n)\OT
E(\tau_0,\tau_1,\dotsc , \tau_{n-1}),\quad
\Sigma(n)=K(n)_*[t_1,t_2,\dotsc ]/(v_nt_i^{p^n}-v_n^{p^i}t_i)  
\end{equation}
with $\deg \tau_i=-(2p^i-1),\deg t_i=-2(p^i-1)$.
Here $E$ is an exterior algebra.
The comultiplication of $\tau_i$ is given by
\begin{align*}
  \Delta(\tau_i)=1\OT \tau_i+\sum_{j=0}^i\tau_j\OT t_{i-j}^{p^j}.
\end{align*}
Since there is an isomorphism $$ \Sigma(n)\cong
K(n)_*\OT_{BP_*} BP_*(BP)\OT_{BP_*} K(n)_*,\ 
$$
the comultiplication
on $\Sigma(n)$
is inherited from the stable cooperations 
$BP_*(BP)$ of  the Brown-Peterson spectrum $BP$.
See \cite{adams,R} for more details on $BP$.
Let $H_n(x,y)$ be the Honda formal group law \cite{Honda}
whose $p$-series is given by $[p](x)=v_nx^{p^n}$, and $\aut_{H_n}(R_*)$ the group of  strict automorphisms of $H_n$ over a $K(n)_*$-algebra $R_*$.
As is well known,
the Hopf algebra $\Sigma(n)$ corepresents the functor $\aut_{H_n}(-)$.
That is, there is an isomorphism
$\aut_{H_n}(R_*)\cong \hom_{K(n)_*\text{-alg.}}(\Sigma (n),R_*) $
of groups.
See \cite{BW,R}.

Previous studies on the exterior algebra  part
$E(\tau_0,\tau_1,\dotsc , \tau_{n-1})$
are known as follows.
Yagita \cite{Y1}  determined the Hopf algebra  $K(n)_*(K(n))$ and
the comultiplication on $E(\tau_0,\tau_1,\dotsc , \tau_{n-1})$
by using the reduction map from the stable cooperations $k(n)_*(k(n))$ of
the connective Morava $K$-theory to the mod $p$ dual Steenrod algebra.
W\"urgler \cite{W2} also determined the Hopf algebra $K(n)_*(K(n))$
in a different way.
Baker \cite{B} focused on  $A_{\If}$-structures of
the $I_n$-adic completion $\widehat{E(n)}$ of the Johnson-Wilson spectrum
$E(n)$, and showed that
there is an inverse system of $A_{\If}$-module spectra over $\WH{E(n)}$
$$\cdots \lra E(n)/I_n^{k+1}\lra E(n)/I_n^{k}\lra \cdots \lra E(n)/I_n^{2}\lra
E(n)/I_n=K(n)$$
such that $\op{holim}_kE(n)/I_n^{k}\simeq \WH{E(n)}$.
Here we fix an  $A_{\If}$-structure of $K(n)$.
Baker and W\"urgler \cite{BW} constructed
a cofiber sequence of $A_{\If}$-module spectra over $\widehat{E(n)}$
$$\bigvee_{0\leq k\leq n-1}\Sigma^{2p^k-2}K(n)\lra E(n)/I_n^2\lra E(n)/I_n=K(n), $$
and the Bockstein operations
$\mathcal{Q}_k:K(n)\to \Sigma^{2p^k-1}K(n)$ of the cofiber sequence.
Let $\BZN$ be the $(2p^n-1)$-skeleton of the classifying space $B\Bbb{Z}/p$.
Given the canonical generators $\ee, x$ with $\deg \ee=1, \deg x=2$, we have
$$K(n)^*(\BZN)\cong E(\ee )\OT K(n)^*[x]/(x^{p^n}).   $$
The operations $\mathcal{Q}_k$ are characterized by the following properties:
\begin{enumerate}
\item For all $0\leq k\leq n-1$, the cohomology operation  
  $$\mathcal{Q}_k:K(n)^*(-)\lra K(n)^*(-) $$
    is a $K(n)^*$-derivation;
\item $\mathcal{Q}_k(\ee)=x^{p^k}$;
\item $\mathcal{Q}_k(y)=0$ for a generator $y$ of
  $K(n)^*(\CPIf)=K(n)^*[[y]]$.
\end{enumerate}
The dual elements of $\mathcal{Q}_0,\dotsc \mathcal{Q}_{n-1}$
yield   $\tau_0,\dotsc ,\tau_{n-1}$ of $K(n)_*(K(n))$.
In particular, the multiplicative coaction
$\Psi_0$
in \cite[Section 1]{BW}
is similar to $\gamma_{\BZN}$ in Proposition \ref{prop:optoGa}.
The studies seem to be related to the structure of formal group laws.
However, it is not clear how
the whole structure of $K(n)_*(K(n))$ including the exterior algebra part
is connected with formal group laws.
This paper highlights connections between them. 

We proceed as follows.
We construct three group-valued functors
$\aut_{H_n}(-)$, $\aut_{\bar{g}_a}(-)$ and $\aut_{\bar{G}_a}(-)$,
which consist of subgroups of  automorphisms  of the formal group laws
$H_n$, $\bar{g}_a$ and $\bar{G}_a$  in Sections 2, 3 and 4, respectively.
These structures induce natural transformations of groups
\begin{equation*}
\alpha:\aut_{H_n}(-)\lra \aut_{\bar{g}_a}(-),\quad
\beta:\aut_{\bar{G}_a}(-)\lra \aut_{\bar{g}_a}(-).
\end{equation*}
The fiber product $C(-)$ of $\alpha$ and $\beta$
is corepresented by the  Hopf algebra $C_*$, which
is isomorphic to $K(n)_*(K(n))$.
To relate $C_*$ to $K(n)_*(K(n))$,
we need stable comodule algebras which is introduced by Boardman \cite{board}.
This leads to a natural  homomorphism $\kappa^*:C_*\to K(n)_*(K(n))$ of Hopf algebras.
Using  the algebra structure of $K(n)_*(K(n))$ in \cite{W2,Y1},
we see that $\kappa^*$ is an isomorphism.
We do not need the Hopf algebra structure of  $K(n)_*(K(n))$.
We emphasize that $\kappa^*$ is constructed naturally,
although we can not prove that $\kappa^*$ is an isomorphism
in a self-contained manner.
The isomorphism $\kappa^*$ reveals  relationship between $K(n)_*(K(n))$
and  formal group laws.

Firstly, we survey \cite{I}
before explaining the details of this paper. 
We use different notations from \cite{I}.
Let $H$ be the mod 2 Eilenberg-MacLane spectrum. 
We introduce functors $\aut_{g_a}(-)$
and $\op{Op}_{H}(-)$ from the category of
non-negative graded commutative $\Ftwo$-algebras 
to the category of groups,
and a natural transformation $\op{Op}_{H}(-)\to \aut_{g_a}(-)$.
The natural transformation  induces the Hopf algebra structure of the
mod 2 dual Steenrod algebra $H_*(H)$ directly.
Historically, Milnor \cite{mil}  determined $H_*(H)$
by calculating the dual of the Steenrod algebra $H^*(H)$.
Let $R_*$  be a non-negative graded commutative $\Ftwo$-algebra,
and
$$g_a(x,y)=x+y$$
the additive formal group law over $R_*$.
Here $\deg x=\deg y=1$, and elements of $R_n$ have degree $-n$.
A homogeneous formal power series
$f(x)=x+\sum_{i\geq 2}a_ix^i\in R_*[[x]]$ of degree 1 is  
called a {\em strict} automorphism of $g_a$ over $R_*$
if it satisfies $f(x+y)=f(x)+f(y)$.
We denote the  strict automorphisms
of $g_a$ over $R_*$ by $\aut_{g_a}(R_*)$,
whose multiplication 
is defined by $f\cdot g=g\circ f(x)$.
Then
$\aut_{g_a}(-)$ is a group-valued functor.
It is easily seen that
a strict automorphism  has a form
$f(x)=x+\sum_{j\geq 1}b_jx^{2^j} $ with $b_j\in R_{2^j-1}$.
Therefore
$\aut_{g_a}(-)$ is corepresented by the Hopf algebra
$$\bar{A}_*=\Ftwo [\xi_1,\xi_2,\dotsc,\xi_n,\dotsc ]\quad (\deg \xi_n=-(2^n-1))$$
with the comultiplication $\xi_n\mapsto \sum_{0\leq i\leq n}\xi_{n-i}^{2^i}\OT \xi_i$.
In other words,
there is the  natural isomorphism
$$\theta: \hom_{\Ftwo\text{-alg}}(\bar{A}_*,R_*)\cong \aut_{g_a}(R_*)$$
which is  given by $\theta(f)=x+\sum_{j\geq1}f(\xi_j)x^{2^j}$.

Let $X$ be a CW-complex, and
$\gamma_X:H^*(X)\to H^*(X)\HOT R_*$  a natural homomorphism of graded abelian groups. 
Here the degree $k$ part of $H^*(X)\HOT R_*$
is $\prod_{n\geq 0}H^{k+n}(X)\OT_{\Ftwo} R_n $ by abuse of notation.
\begin{defi}[\cite{I},Definition 2.1]\label{defi:mulop0}
A natural homomorphism  $\gamma_X:H^*(X)\to H^*(X)\HOT R_*$ as above
is called a {\em multiplicative operation} over $R_* $
if $\gamma_X$ satisfies the conditions:
\begin{enumerate}
\item The following diagram is commutative:
$$
\xymatrix{
  H^*(X)\OT H^*(Y)\ar[dd]_{\times}\ar[r]^-{\gamma_X \OT\gamma_Y}
  & (H^*(X)\HOT R_*)\OT (H^*(Y)\HOT R_*)\ar[d]  \\
  & (H^*(X)\OT H^*(Y))\HOT (R_*\OT R_*)\ar[d]^{\times \OT \mu}\\
  H^*(X\times Y)\ar[r]^{\gamma_{X\times Y}} & H^{*}(X\times Y)\HOT R_*
},
$$
where $\times$ is the cross product,
$\mu$ is the multiplication on $R_*$, and the unlabeled map 
interchanges the second and third factors.
\item\label{1.1-2} $\gamma_{S^i} (u_i)=u_i\OT 1$ for $i=0,1$.
  Here $S^i$ is  the $i$-sphere with basepoint $o$,  and
  $u_i$ is  the canonical generator of $ {H}^{i}(S^i,o)\subset H^{i}(S^i)$.
  
\end{enumerate}
\end{defi}
We add the case $i=0$ to Condition (\ref{1.1-2}) of \cite[Definition 2.1]{I}.
It follows from Conditions (i) and (ii)
that $\gamma_{S^k}(u_k)=u_k\OT 1$ for any $k\geq 0$,
and  that $\gamma_X $ is a stable operation.
Let $\op{Op}_H(-)$ be the functor which sends $R_*$ to
the set of multiplicative operations over $R_*$.
For $\gamma_X,\ \gamma_X'\in \op{Op}_H(R_*)$,
we define the multiplication $\gamma_X\cdot \gamma_X'$
by
the composition
\begin{equation}\label{eq:iterateH}
\gamma_X\cdot \gamma_X':H^*(X)\xrightarrow{\gamma_X'}
H^*(X)\HOT R_*\xrightarrow{\gamma_X \OT 1}H^*(X)\HOT R_* \HOT R_*
\xra{1\OT \mu} H^*(X)\HOT R_*.
\end{equation}
There exists a {\em universal} multiplicative operation
$$\rho_X:H^*(X)\lra H^*(X)\HOT H_*(H) $$
in the following sense.
Given $\gamma_X \in \op{Op}_H(R_*)$,
there exists a unique algebra homomorphism $\bar{\gamma}:H_*(H)\to R_*$
such that the following diagram is commutative:
$$\xymatrix{
  H^*(X)\ar[r]^-{\rho_X}\ar[rd]_{\gamma_X}
  & H^*(X)\HOT H_*(H)\ar[d]^{1\HOT \bar{\gamma}}\\
& H^*(X)\HOT R_*.
}
$$
Therefore, there is a one-to-one correspondence between the multiplicative
operations over $R_*$ and the algebra homomorphisms $H_*(H)\to R_*$.
That is, we have an isomorphism
$\op{Op}_H(R_*)\cong \hom_{\Ftwo\text{-alg}}(H_*(H),R_*) $ of sets.
Since $\rho_X$ is a comodule, we have
$$
    \xymatrix{
(\rho_{X}\OT 1)\circ\rho_{X}=(1\OT \psi)\circ \rho_{X}:
H^*(X)\hat{}\ar[r]^-{\rho_{X}} &H^*(X)\HOT H_*(H)
\ar@<0.5ex>[r]^-{\rho_{X}\OT 1} \ar@<-0.5ex>[r]_-{1\OT \psi} 
& H^*(X)\HOT H_*(H)\HOT H_*(H),
      }
      $$
where $\psi$ is the comultiplication on $H_*(H)$. 
Moreover the multiplication
$\gamma_{X}  \cdot \gamma'_{X}$ coincides with the composition
$$ H^*(X)\xrightarrow{\rho_X}
H^*(X)\HOT H_*(H)\xrightarrow{\rho_X \OT 1}H^*(X)\HOT H_*(H) \HOT H_*(H)
\xra{1\OT \bar{\gamma}\OT \bar{\gamma}'}H^*(X)\HOT R_* \HOT R_*
\xra{1\OT \mu} H^*(X)\HOT R_*.$$
The inverse element of $\gamma_X$ is defined as
$$ \gamma_X^{-1}=(1\OT \bar{\gamma})\circ(1\OT \chi)\circ\rho_X:
H^*(X)\xrightarrow{\rho_X} H^*(X)\HOT H_*(H)\xrightarrow{1\OT \chi}
H^*(X)\HOT H_*(H)\xrightarrow{1\OT \bar{\gamma}} H^*(X)\HOT R_*,$$
where $\chi$ is the conjugation of $H_*(H)$.
Thus $\op{Op}_H(-)$ is a group-valued functor,
and
there is a natural isomorphism
$$\op{Op}_H(-)\cong \hom_{\Ftwo\text{-alg}}(H_*(H),-) $$
of groups. 
In particular, the universal multiplicative operation $\rho_X$
corresponds to the identity map $1_{H_*(H)}$.
Note that  $\rho_X$ coincides with
Milnor coaction in \cite{mil}.
The universal multiplicative operation $\rho_X$ is almost the same as
the universal coaction of the stable comodule algebra $H^*(X)$
in Definition \ref{defi:comalg} or \cite[Theorem 12.8]{board}.
The author \cite{I}  constructed it by using  spectra, and
Boardman \cite{board}
introduced it by studying various operations in terms of comonads.
In this paper, we employ Boardman's stable comodule algebras.

To relate $\op{Op}_H(-)$ to $\aut_{g_a}(-)$,
we substitute the classifying space $X=B\Bbb{Z}/2$
into  multiplicative operations $\gamma_X$.
For the canonical generator $x$ of $H^1(B\Bbb{Z}/2)$,
we have $H^*(B\Bbb{Z}/2)\cong \Ftwo [x]$.
The $i$th projections $p_i:B\Bbb{Z}/2\times B\Bbb{Z}/2\to B\Bbb{Z}/2$
for $i=1,2$ induce the generators $x_i=p_i^*(x)\in H^1(B\Bbb{Z}/2\times B\Bbb{Z}/2)$,
and we obtain $H^*(B\Bbb{Z}/2\times B\Bbb{Z}/2)\cong \Ftwo [x_1,x_2]$.
The classifying space $B\Bbb{Z}/2$ is an $H$-space with
 the multiplication
$m:B\Bbb{Z}/2\times B\Bbb{Z}/2\to B\Bbb{Z}/2$.
The pullback $m^*(x)=x_1+x_2$ coincides with the additive formal group
law $g_a$.
For $\gamma_X\in \op{Op}_H(R_*)$,
we see that $\gamma_{B\Bbb{Z}/2} (x)$
is an element of the ring 
$R_*[[x]]\cong H^*(B\Bbb{Z}/2)\HOT R_*$.
Condition (i) of Definition \ref{defi:mulop0}
and the naturality of $\gamma$ with respect to $m$
lead to
$$\gamma_{B\Bbb{Z}/2}(x)+\gamma_{B\Bbb{Z}/2}(y)=\gamma_{B\Bbb{Z}/2}(x+y).$$
Condition (ii)
implies $\gamma_{B\Bbb{Z}/2}(x)\equiv x$ modulo $(x^2)$.
These mean that $\gamma_{B\Bbb{Z}/2} (x)$
is a strict automorphism of $g_a$ over $R_*$.
Assigning $\gamma_{B\Bbb{Z}/2} (x)$ to $\gamma_X$,
we have the natural homomorphism of groups 
$$F:\op{Op}_H(-)\cong \hom_{\Ftwo\text{-alg}}(H_*(H),-)\lra \aut_{g_a}(-)
\cong \hom_{\Ftwo\text{-alg}}(\bar{A},-). $$
By Yoneda's lemma, we have the corresponding  homomorphism of Hopf algebras
$$\overline{F(\rho_X)}:\bar{A}_*\lra H_*(H).$$
We show that $\overline{F(\rho_X)}$ is an isomorphism as follows.
We modify M\`ui's results \cite{Mui1,Mui2}, and obtain another multiplicative operations
$$S_n:H^*(X)\lra H^*(X)\hat{\OT} D[n]_*,$$
where $D[n]_*=\Ftwo [\xi[n]_1,\dotsc ,\xi[n]_n]$
with $\deg \xi[n]_i=-(2^i-1) $.
 Lomonaco \cite{L} constructed the same operation.
Since $F$ is a natural transformation,
the following diagram 
is commutative:
$$
\xymatrix{
  \bar{A}_*\ar[rd]_{\overline{F(S_n)}}\ar[r]^{\overline{F(\rho_X)}}& H_*(H)\ar[d]^{\bar{S}_n}\\
  & D[n]_*.
}
$$
It follows from  \cite{Mui1,Mui2}
that $\overline{F(S_n)}:\bar{A}_*\lra D[n]_*$
is given by $\xi_i\mapsto \xi[n]_i$ for $i\leq n$, and
$\xi_i\mapsto  0$ for $i>n$.
Therefore $\overline{F(S_n)}$ is an isomorphism in some  range of
degrees for sufficiently large $n$,
and we see that   $\overline{F(\rho_X)}:\bar{A}_*\to H_*(H)$ is injective.
Serre \cite{serre} determined the mod 2 cohomology of the Eilenberg-MacLane space $K(\Pi ,n)$,
where $\Pi$ is a finitely generated abelian group.
It induces the Poincar\'e series of $H_*(H)$, which coincides with
that of $\bar{A}_*$.
Hence  $\overline{F(\rho_X)}$  is an isomorphism.
The Hopf algebra $\bar{A}_*$ derives from automorphisms of the
additive formal group $g_a$, and we construct $F$ naturally. 
Thus, we determine the Hopf algebra  $H_*(H)$
directly 
without using the Steenrod algebra $H^*(H)$

Next, we recall  \cite{I2},
which extends to the odd primary case. 
Let $H$ be the mod $p$ Eilenberg-MacLane spectrum.
The mod $p$ dual Steenrod algebra is
$$H_*(H)\cong E(\tau_0,\tau_1,\dotsc)\OT \FP[\xi_1,\xi_2,\dotsc ]  $$
whose comultiplication is given by $\Delta(\xi_n)=\sum_{i=0}^n\xi_{n-i}^{p^i}\OT \xi_i$
and $\Delta(\tau_n)=\tau_n\OT 1+\sum_{i=0}^n\xi_{n-i}^{p^i}\OT \tau_i$.
To obtain $H_*(H)$, we  use 
the classifying space $B\Bbb{Z}/p$ and a
graded 2-dimensional additive formal group law $G_a$
instead of $B\Bbb{Z}/2$ and $g_a$, respectively.
Given the canonical generators $\epsilon \in H^1(B\Bbb{Z}/p)$
and $x\in H^2(B\Bbb{Z}/p)$,
we have $H^*(B\Bbb{Z}/p)\cong E(\epsilon)\OT \FP [x]$.
The multiplication $m$ on $B\Bbb{Z}/p$ induces
\begin{align*}
  m^*(\epsilon) =\epsilon_1+\epsilon_2,\ m^*(x)=x_1+x_2\in 
  E(\ee_1,\ee_2)\OT \FP[x_1,x_2]\cong H^*(B\Bbb{Z}/p\times B\Bbb{Z}/p),
\end{align*}
where $\ee_i=p_i^*(\ee),\ x_i=p_i^*(x) $ for the $i$th projections
$p_i:B\Bbb{Z}/p\times B\Bbb{Z}/p \to B\Bbb{Z}/p $. 
We define $G_a$ as the pair
\begin{equation}\label{eq:2dimAFGL}
G_a(X_1;X_2)=G_a(\ee_1,x_1;\ee_2,x_2)=(m^*(\ee), m^*(x))=(\ee_1+\ee_2,x_1+x_2).  
\end{equation}
Let $R_*$ be a non-negative graded $\FP$-algebra.
A pair of homogeneous power series 
\begin{align*}
  f(1)(\epsilon,x)=\ee+\sum_{i=0}^{\infty}a_i x^{p^i},\ 
  f(2)(\epsilon,x)=x+\sum_{i=1}^{\infty}b_i x^{p^i} \in E(\epsilon)\OT R_*[[x]]
\end{align*}
with $\deg f(1)=1,\ \deg f(2)=2$
is  an automorphism of $G_a$ over $R_*$.
In other words, the pair satisfies
\begin{align*}
  f(1)(\ee_1+\ee_2,x_1+x_2)&=f(1)(\ee_1,x_1)+f(1)(\ee_2,x_2)
  \\
   f(2)(\ee_1+\ee_2,x_1+x_2)&=f(2)(\ee_1,x_1)+f(2)(\ee_2,x_2).
\end{align*}
The pair $(f(1), f(2))$ is  called  a
{\em quasi-strict} automorphism of $G_a$ over $R_*$.
If $a_1=0$, then it is said to be {\em strict} in \cite{H}.
Let $\aut_{G_a}(R_*)$ be the group of quasi-strict automorphisms
of $G_a$ over $R_*$.
The remaining parts  proceed in a similar way to \cite{I}.
Cartan \cite{C} determined the mod $p$ cohomology of the
Eilenberg-MacLane space $K(\Pi,n)$, where $\Pi$ is a cyclic group.
The result   and
a natural transformation $\op{Op}_H(-) \to \aut_{G_a}(-)$
lead to the Hopf algebra structure of $H_*(H)$.
The elements $\tau_0,\dotsc ,\tau_{n-1}$ of $H_*(H)$ is similar to
$\tau_0,\dotsc ,\tau_{n-1}$ of $K(n)_*(K(n))$.
This indicates that we have relationship between the exterior part of $K(n)_*(K(n))$ and  formal group laws.

There are previous studies on the functor $\aut_{G_a}(-)$.
Hopkins \cite[Section 7]{hop} wrote the supergroup $\Bbb{G}_a^{1,1}$ for $G_a$ and referred to automorphisms of $\Bbb{G}_a^{1,1}$,
which is the same as $\aut_{G_a}(-)$.
Yamaguchi \cite[Appendix]{yam}  constructed a functor
which is isomorphic to $\aut_{{G}_a}(-)$ as follows.
Let $g_a'(x,y)=x+y$ be the additive formal group law over 
the dual numbers $R_*[\epsilon]/(\epsilon^2)$,
and $\Gamma (R_*)$  the automorphism group
of $g_a'$ over $R_*[\epsilon]/(\epsilon^2)$  with some conditions.
Then we have an isomorphism $\aut_{{G}_a}(-)\cong \Gamma (-)$.
In \cite{I2}, the author 
mainly  used $\Gamma (-)$, and dealt with $\aut_{G_a}(-)$ in Appendix.
Though the author  did not refer to the results in \cite{I2},
the functor $\aut_{{G}_a}(-)$ had been obtained before.

Lastly, we outline this paper in more detail.
In Section \ref{sec:fgls},
we recall the  Honda formal group law $H_n$ of height $n$ and the
group-valued functor $\aut_{H_n}(R_*)$ consisting of the strict automorphisms
of $H_n$ over a $K(n)_*$-algebra $R_*$.
In Section \ref{sec:afgls},
we survey   formal group law chunks, truncated forms of 1-dimensional
formal group laws from  \cite{H}.
We need only the additive formal group law chunk
$\bar{g}_a(x_1,x_2)=x+y\in R_*[x_1,x_2]/(x_1,x_2)^{p^n}$ and
the functor $\aut_{\bar{g}_a}(R_*)$ consisting of the strict automorphism group
of $\bar{g}_a$. 
In Section \ref{sec:2fgl},
we extend $\bar{g}_a$ and $\aut_{\bar{g}_a}(-)$
to a 2-dimensional additive formal group law chunk $\bar{G}_a$
and the quasi-strict automorphism group $\aut_{\bar{G}_a}(R_*)$.
We define
$\bar{G}_a(\ee_1,x_1;\ee_2,x_2)=(\ee_1+\ee_2,x_1+x_2)$
as the pair of elements of $E(\ee_1,\ee_2)\OT R_*[x_1,x_2]/(x_1,x_2)^{p^n}$.
In a similar way to \cite{I2},
the Hopf algebra $B_*$ which corepresents  the functor $\aut_{\bar{G}_a}(-)$
yields the elements $\tau_0,\dotsc ,\tau_{n-1}$ of $K(n)_*(K(n))$.
In Section \ref{sec:fiber},
we construct
natural homomorphisms of groups
$$\alpha:\aut_{H_n}(-)\lra \aut_{\bar{g}_a}(-),\quad
\beta:\aut_{\bar{G}_a}(-)\lra \aut_{\bar{g}_a}(-),$$
and the fiber product $C(-)$ of $\alpha $ and $\beta$.
We define $C_*$ by the Hopf algebra which corepresents 
the group-valued functor $C(-)$.
In Section \ref{sec:boardman},
we survey stable comodule algebras and multiplicative operations
for  cohomologies
$E=H\Bbb{Z}/2,\ H\Bbb{Z}/p,\ KU,\ K(n),\ MU,\ BP$
from \cite{board}.
Let $\op{Op}_E(R_*)$ be the set of multiplicative operations of $E$ over
an $E_*$-algebra $R_*$.
We use Boardman's stable comodule algebras to study the functor $\op{Op}_E(-)$.
A {\em stable comodule algebra} $E^*(X)$
has the universal coaction $\rho_X:E^*(X)\to E^*(X)\HOT E_*(E)$.
It induces an isomorphism 
$\op{Op}_E (R_*)\cong \hom_{E_*\text{-alg.}}(E_*(E),R_*)$
of sets.
We are concerned with the case $E=H\Bbb{Z}/2,\ H\Bbb{Z}/p$ or $K(n)$.
Since  the stable cooperations $E_*(E) $ is a Hopf algebra in these cases, 
we  improve them slightly.
Then the multiplicative operations $\op{Op}_{E}(-)$ is
not only a set-valued functor, but also a group-valued functor.
In particular, there exists an isomorphism
$$\op{Op}_E (R_*)\cong \hom_{E_*\text{-alg.}}(E_*(E),R_*)$$
of groups.
Therefore, a similar approach to \cite{I,I2} can be applied for  $K(n)$.
In section \ref{sec:isom}, we show that $C_*$ is isomorphic to $K(n)_*(K(n))$
naturally.
Let $\CP^{\If}$  be the infinite complex projective space,
$\CP^{p^n-1}$ the $(p^n-1)$-dimensional complex projective space,
and $\BZN$ the $(2p^n-1)$-skeleton of the classifying space $B\Bbb{Z}/p$.
Substituting $X=\CP^{\If}$, $\CPN$, $\BZN$  into multiplicative operations
$\gamma_X$,
we  construct  natural homomorphisms
$$\kappa_1:\op{Op}(-)\lra \aut_{H_n}(-),\
\kappa_2:\op{Op}(-)\lra \aut_{\bar{g}_a}(-),\
\kappa_3:\op{Op}(-)\lra \aut_{\bar{G}_a}(-) $$
of groups.
By the definition of $C(-)$ and
$\alpha \circ \kappa_1=\beta \circ \kappa_3=\kappa_2$,
we obtain the natural transformation
$$\kappa: \op{Op}(-)\cong  \hom_{K(n)_*\text{-alg}}(K(n)_*(K(n)),-)
\lra C(-)\cong \hom_{K(n)_*\text{-alg}}(C_*,-) $$
of groups.
This yields the corresponding homomorphism $\kappa^*:C_*\to K(n)_*(K(n))$
of Hopf algebras.
It follows from the algebra structure of $K(n)_*(K(n))$
that $\kappa^*$ is an isomorphism.

I would like to thank  Daisuke Kishimoto and
 Atsushi Yamaguchi for useful  comments
on an earlier version of this work.
Further thanks are due to  Takeshi Torii for helpful discussions on this work
and  improvement of Definition \ref{defi:mulop}.
I am deeply grateful to Dr.~ Andrew Baker for  valuable advice on
previous researches.

\section{Formal group laws of height $n$}\label{sec:fgls}

In this section, we survey the Honda formal group law $H_n$ of height $n$
and automorphisms of $H_n$.
See \cite{R,W3} for more details.
We prepare some categories:
\begin{itemize}
\item $\mathbf{Gp}$: the category of groups;
\item $\mathbf{Alg}$: the category of graded commutative $K(n)_*$-algebras.
\end{itemize}
We express the coefficient ring of $n$th Morava $K$-theory $K(n)$
as $K(n)_*=\FP[v_n,v_n^{-1}]$ with $\deg(v_n)=-2(p^n-1)$.
Any algebra $R_*$ is an object of $\mathbf{Alg}$
except in Section \ref{sec:boardman}.
Note that every element of $R_n$ is of degree $-n$.
\begin{defi}\label{defi:Hn}
  Let $R_*$  be a  $K(n)_*$-algebra, and
  $x,y$ indeterminates of degree 2.
  \begin{enumerate}
\renewcommand{\theenumi}{\arabic{enumi}}
\renewcommand{\labelenumi}{(\theenumi)}
\item A homogeneous power series $H_n(x,y)\in  R_*[[x,y]]$ of degree 2 is called
  the {\em Honda formal group law}  of height $n$ over $R_*$
  if $H_n$ is the $p$-typical formal group law whose  $p$-series  is given by
     $[p]_{H_n}(x)=v_nx^{p^n}$.
\item 
  A homogeneous power series $f(x)\in  R_*[[x]]$
  of degree 2  is a {\em strict automorphism}
  of $H_n$
  if $f(x)$ is an automorphism of $H_n$ with
  $f(x)\equiv x \mod (x^2)$.
\item We denote by $\aut_{H_n}(R_*)$ 
the set of strict automorphisms  of $H_n$ over $R_*$.
\item \label{2.1-4}
  We define the multiplication on $\aut_{H_n}(R_*)$
by
\begin{equation*}
  \label{eq:prod-fgl}
f(x)\cdot g (x)=g\circ f(x).  
\end{equation*}
Then $\aut_{H_n}(-)$ is a functor from $\mathbf{Alg}$ to $\mathbf{Gp}$.
  \end{enumerate}
\end{defi}
It is well known  that the Hopf algebra $\Sigma(n)$ in \eqref{eq:kncoop}
corepresents the functor $\aut_{H_n}(-)$.
See \cite{BW2,BW,R}.
The multiplication of Definition \ref{defi:Hn} (\ref{2.1-4})
is different from the usual composition of automorphisms.
Therefore 
we use the conjugations $\bar{t}_i$ of $t_i$,
and we express $\Sigma(n)$ as
$$ \Sigma(n)\cong
K(n)_*[\bar{t}_1,\bar{t}_2,\dotsc]/(\bar{t}_i^{p^n}-v_n^{p^i-1}\bar{t}_i)
\quad (\deg \bar{t}_i=-(2p^i-2)). $$
The comultiplications of $\bar{t}_i$ correspond to the multiplication on $\aut_{H_n}(-)$
as follows.
\begin{thm}\label{lem:repHn}
  Let $\bar{t}_i$ be the elements as above, $\bar{t}_0=1$,
  and $R_*\in \mathbf{Alg}$.
  Assigning 
$$ f(x)=\sum_{i\geq 0} {}^{H_n}\theta(\bar{t}_i)x^{p^i} \in \aut_{H_n}(R_*)$$
to a homomorphism  $\theta\in \hom_{K(n)_*\text{-alg}}(\Sigma(n),R_*)$,
we obtain the natural isomorphism 
$$\hom_{K(n)_*\text{-alg}}(\Sigma(n),-)\cong \aut_{H_n}(-)
$$
of groups. Here $\sum {}^{H_n}$ is the formal sum of $H_n$.
  
\end{thm}

 The Hopf algebra   ${\Sigma} (n)$
 has relationship with $K(n)^*(\CP^{\infty})$ in Section \ref{sec:isom}.

\section{Additive formal group law chunks}\label{sec:afgls}

In \cite[Definition 5.7.1]{H}, 
a truncated form of a formal group law is called a
{\em formal group law chunk}.
We recall some definitions.
Since we treat  formal group laws over a graded algebra,
we modify them.

\begin{defi}
  Let  $R_*$ be a $\KN$-algebra. 
  \begin{enumerate}
\renewcommand{\theenumi}{\arabic{enumi}}
\renewcommand{\labelenumi}{(\theenumi)}    
  \item 
  Let $x$ and $y$ be indeterminates of degree 2.
  A graded {\em formal group law chunk} of size $n$ over  $R_*$ is a
  homogeneous element 
  $F(x,y)\in R_*[x,y]/(x,y)^{n+1} $ of degree 2
  which satisfies the conditions: 
  \begin{enumerate}
\renewcommand{\theenumii}{\roman{enumii}}
\renewcommand{\labelenumii}{(\theenumii)}    
\item $F(0,x)=F(x,0)=x $;
\item $F(x,y)=F(y,x) $;
\item $F(x,F(y,z))=F(F(x,y),z)$ in $R_*[x,y,z]/(x,y,z)^{n+1}$.
  \end{enumerate}
\item
  The  { formal group law chunk} $F(x,y)=x+y$ of size $n$ over  $R_*$ 
  is said to be {\em additive}.
  In particular, we denote the additive formal group law chunk of size $p^n-1$
  by
  $$\bar{g}_a(x,y)=x+y \in R_*[x,y]/(x,y)^{p^n}.$$
  \end{enumerate}
\end{defi}
\begin{defi}\label{defi:ga}
  \begin{enumerate}
\renewcommand{\theenumi}{\arabic{enumi}}
\renewcommand{\labelenumi}{(\theenumi)}    
  \item 
    Let $F,G $ be formal group law chunks of size $n$
    over a $\KN$-algebra $R_*$, and $x$ an indeterminate of degree 2.
  A {\em homomorphism} from $F$ to $G$ is a homogeneous element 
  $f(x)\in R_*[x]/(x)^{n+1} $ of degree 2
  which satisfies the conditions: 
    \begin{enumerate}
\renewcommand{\theenumii}{\roman{enumii}}
\renewcommand{\labelenumii}{(\theenumii)}    
  \item The constant term of $f(x) $ is 0;
  \item $f(F(x,y))=G(f(x),f(y)) $.
  \end{enumerate}
  If $f(x)\equiv x \mod (x)^2$, then it is called a {\em strict} isomorphism.
\item\label{3.4-2}
  We denote the strict automorphisms of $\bar{g}_a$ over $R_*$ by $ \aut_{\bar{g}_a}(R_*)$.
    The multiplication on $ \aut_{\bar{g}_a}(R_*)$ is defined as 
   Definition \ref{defi:Hn} (\ref{2.1-4}).
  Then  $\aut_{\bar{g}_a}(-) $ is a functor from $\mathbf{Alg}$ to $\mathbf{Gp}$.
\end{enumerate}
\end{defi}
\begin{prop}\label{lem:repga}
  The group-valued functor $\aut_{\bar{g}_a}(-)$ is corepresented by
  the Hopf algebra
  $$A_*=K(n)_*[\xi_1,\dotsc ,\xi_{n-1}]\quad \left( \deg \xi_i=-2(p^i-1)\right)$$
  whose comultiplication is given by
  $\xi_k\mapsto \sum_{i=0}^k \xi_{k-i}^{p^i}\otimes \xi_i$.
  Here $\xi_0=1$.
  In other words,
  we have the natural isomorphism
  $$\hom_{\KN\text{-alg}}(A_*,R_*)\cong \aut_{\bar{g}_a}(R_*)  \quad
  \left(\theta\mapsto f(x)=x+\sum_{i= 1}^{n-1}\theta (\xi_i)x^{p^i}\right) $$
  of groups.
\end{prop}
\begin{proof}
  An automorphism $f(x)$ of $\bar{g}_a$  satisfies $f(x)+f(y)=f(x+y)$.
  Thereby,  a strict automorphism $f(x)$ of $\bar{g}_a$ is expressed as
\begin{equation}
  \label{eq:autfglc}
f(x)=\sum_{k=0}^{{n-1}}a_{k} x^{p^k}\in R_*[x]/(x^{p^n}),  
\end{equation}
where $a_0=1$ and $a_k\in R_{2p^k -2}$ for $0<k\leq n-1$.
The multiplication $f(x)\cdot g(x)=g\circ f(x)$
induces the comultiplication on $A_*$.
\end{proof}
 The Hopf algebra   $A_*$
 is connected with $K(n)^*(\CPN)$ in Section \ref{sec:isom}.

\section{2-dimensional additive formal group law chunks}\label{sec:2fgl}
In  \cite[Chapter II]{H}, Hazewinkel  defined
higher dimensional formal group laws and homomorphisms
over a commutative ring.
In \cite[Appendix]{I2}, the author  introduced
a graded version of these definitions 
to describe the  dual Steenrod algebra $H_*(H)$ for an odd prime $p$.
In particular, we   need 
the graded 2-dimensional additive formal group law $G_a$
over a graded $\FP$-algebra, which is given by \eqref{eq:2dimAFGL}.
The group of  quasi-strict automorphisms  of $G_a$  induces the
Hopf algebra  $H_*(H)$ including the exterior part $E(\tau_0,\tau_1,\dotsc)$.

We  employ a
graded 2-dimensional additive formal group law chunk $\bar{G}_a$,
the truncated version of $G_a$,
which yields the exterior part of $K(n)_*(K(n))$.
We  give only the definition of graded  2-dimensional formal group law chunks
which are needed for our purpose.
In particular, we need the additive one.
See \cite[Appendix]{I2} for more details on  
graded higher dimensional formal group laws.
\begin{defi}\label{defi:Ga2}
  Given a $\KN$-algebra $R_*$, we fix the $R_*$-algebra
  $ E(\ee_1,\ee_2)\otimes R_*[x_1,x_2]/(x_1,x_2)^{p^n} $
  with $\deg \ee_i=1,\deg x_i=2$ for $i=1,2$.
  \renewcommand{\theenumi}{\arabic{enumi}}
\renewcommand{\labelenumi}{(\theenumi)}
  \begin{enumerate}
  \item\label{defi:Ga2-1}    A pair 
   $$\bar{F}(X_1;X_2)=\bar{F}(\ee_1,x_1;\ee_2, x_2)
   =(\bar{F}(1)(\ee_1,x_1;\ee_2, x_2),\bar{F}(2)(\ee_1,x_1;\ee_2, x_2)) $$
   of elements of $E(\ee_1,\ee_2)\otimes  R_*[x_1,x_2]/(x_1,x_2)^{p^n}$ 
   is called a graded {\em  2-dimensional  formal group law chunk} over $R_*$
   if it satisfies the following conditions:
   \begin{enumerate}
  \renewcommand{\theenumii}{\roman{enumii}}
\renewcommand{\labelenumii}{(\theenumii)}
   \item $\bar{F}(i)$ is a homogeneous element of degree $i$ for $i=1,2$;
   \item $\bar{F}(1)\equiv \ee_1+\ee_2,\ \bar{F}(2)\equiv x_1+x_2\mod (x_1,x_2)^2$;
   \item 
     $\bar{F}(i)(X_1;X_2)=\bar{F}(i)(X_2;X_1)$;
   \item $F(i)(F(i)(X_1;X_2);X_3)=F(i)(X_1;F(i)(X_2;X_3))$
     in $E(\ee_1,\ee_2,\ee_3) \otimes R_*[x_1,x_2,x_3]/(x_1,x_2,x_3)^{p^n}$.
   \end{enumerate}
 \item \label{defi:Ga2-2} The graded   2-dimensional  formal group law chunk
   $$\bar{G}_a(X_1;X_2) 
   =(\bar{G}_a(1)(\ee_1,x_1;\ee_2, x_2),\bar{G}_a(2)(\ee_1,x_1;\ee_2, x_2))=
   (\ee_1+\ee_2,x_1+x_2)$$
is said to be {\em additive}. 
 \end{enumerate}
\end{defi}
\begin{defi}\label{defi:Ga2hom}
Let  $R_*[x]/(x^{p^n})\otimes E(\ee)$ be the graded algebra
with $\deg\ee=1,\ \deg x=2$.
  \begin{enumerate}
  \renewcommand{\theenumi}{\arabic{enumi}}
\renewcommand{\labelenumi}{(\theenumi)}
\item \label{4.2-0}
  Let $\bar{F}(X_1;X_2),\bar{F}'(X_1;X_2)$ be
  graded 2-dimensional formal group law chunks over $R_*$.
  A pair $$f(X)=f(\ee,x)=(f(1)(\ee,x),f(2)(\ee,x)) $$ of elements of
    $E(\ee)\otimes R_*[x]/(x^{p^n})$ is
     a  {\em homomorphism} from
     $\bar{F}$ to $\bar{F}'$ if it satisfies the following conditions:
     \begin{enumerate}
  \renewcommand{\theenumii}{\roman{enumii}}
  \renewcommand{\labelenumii}{(\theenumii)}
     \item $f(i)$ is a homogeneous element of degree $i$ for $i=1,2$;
     \item The constant terms of $f(1)$ and $f(2) $ are 0;
     \item $f(\bar{F}(X_1;X_2))=\bar{F}'(f(X_1);f(X_2))$.
     \end{enumerate}
\item \label{4.2-1}
  An endmorphism  $f(X)=(f(1)(\ee,x),f(2)(\ee,x)) $ of
   $\bar{G}_a$ in Definition \ref{defi:Ga2}(\ref{defi:Ga2-2}) is 
     a  {\em quasi-strict} automorphism over $R_*$
     if it satisfies 
     \begin{equation}
       \label{eq:4.2-1-2}
     f(1)(\ee,x)\equiv \ee +a_0x,\ f(2)(\ee,x)\equiv x \mod (x)^2,        
     \end{equation}
  where $a_0 \in R_1 $.
\item \label{4.2-3:prod-2afgl}
  We denote the quasi-strict automorphisms of $\bar{G}_a$ over $R_*$ by $  \aut_{\bar{G}_a}(R_*)$. The multiplication on $ \aut_{\bar{G}_a}(R_*)$ is defined as 
\begin{equation*}
  \label{eq:prod-2afgl}
f(X)\cdot g(X)=g\circ f(X)=(g(1)(f(1)(X),f(2)(X)),
g(2)(f(1)(X),f(2)(X))), 
\end{equation*}
where $f(X)=(f(1)(X),f(2)(X)),\ g(X)=(g(1)(X),g(2)(X))\in \aut_{\bar{G}_a}(R_*)$.
\end{enumerate}
\end{defi}
If an endmorphism $f(X)$ of $\bar{G}_a$ satisfies \eqref{eq:4.2-1-2},
then it is an invertible element of $\aut_{\bar{G_a}}(R_*) $ and
therefore an automorphism of $\bar{G}_a$.
The term "quasi-strict" is used in   \cite{I2}.
Thus, we obtain the functor
$$\aut_{\bar{G}_a}(-):\mathbf{Alg}\lra \mathbf{Gp}. $$
An automorphism $f(X)=(f(1)(\ee,x),f(2)(\ee,x)))$ of $\bar{G}_a$
satisfies 
\begin{equation}\label{eq:autGa}
  \begin{aligned}
  f(1)(\ee_1+\ee_2,x_1+x_2)&=f(1)(\ee_1,x_1)+f(1)(\ee_2,x_2)
  \\
   f(2)(\ee_1+\ee_2,x_1+x_2)&=f(2)(\ee_1,x_1)+f(2)(\ee_2,x_2).
  \end{aligned}
\end{equation}
The functor $\aut_{\bar{G}_a}(-)$ is corepresented as follows.
\begin{thm}\label{prop:repGa}
Let
$$B_*= K(n)_*[\xi_1,\dotsc ,\xi_{n-1}]\otimes E(\bar{\tau}_0,\dotsc ,\bar{\tau}_{n-1})
\quad (\deg \bar{\tau}_n=-(2p^n-1), \deg \xi_n=-2(p^n-1))$$ 
be the Hopf algebra whose comultiplication is given by
\begin{align}\label{eq:coprodB}
\bar{\tau}_k\mapsto  \bar{\tau}_k\otimes 1+\sum_{i=0}^k \xi_{k-i}^{p^i}\otimes \bar{\tau}_i,\
\xi_k\mapsto \sum_{i=0}^k \xi_{k-i}^{p^i}\otimes \xi_i,
\end{align}
where $\xi_0=1$.
Then
$B_*$ 
corepresents the group-valued functor $\aut_{\bar{G}_a}(-)$.
In other words, we have the natural  isomorphism of groups
$$\hom_{K(n)^*\text{-alg}}(B_*,-)\cong \aut_{\bar{G}_a}(-) 
\quad \left(\theta\mapsto \left(f(1),f(2)\right)
=\left(\ee+\sum_{i=0}^{n-1}\theta(\bar{\tau}_i)x^{p^{i}},
x+\sum_{i=1}^{n-1}\theta(\xi_i)x^{p^{i}}
\right)\right).$$
\end{thm}
\begin{proof}
  A quasi-strict automorphism $f(X)=(f(1)(\ee,x),f(2)(\ee,x))$
  satisfies the equalities
  \eqref{eq:autGa}.
  By a similar calculation to \cite[Appendix]{I2},
it is easily seen that 
$f(X) $
is expressed as
\begin{equation}\label{eq:2autfglc}
\begin{aligned}
  f(1)(\ee,x)&=\ee+a_0x +a_1x^p+\cdots  +a_{n-1}x^{p^{n-1}}
               =\ee+\sum_{i=0}^{n-1}a_ix^{p^i}\in
  E(\ee)\otimes R_*[x]/(x)^{p^n}\\
  f(2)(\ee,x)&=\phantom{\ee+a_0}x +b_1x^p+\cdots  +b_{n-1}x^{p^{n-1}}
=\phantom{\ee + \ } \sum_{i=0}^{n-1}b_ix^{p^i}              \in E(\ee) \otimes R_*[x]/(x)^{p^n},
\end{aligned}
\end{equation}
where $a_i\in R_{2p^i-1},\ b_i\in R_{2p^i-2}$ and $b_0=1  $.
Given a quasi-strict automorphism $g(X)$ with
$$g(1)(\ee,x)=\ee+\sum_{j=0}^{n-1}a'_jx^{p^j},\ 
g(2)(\ee,x)=\sum_{j=0}^{n-1}b'_jx^{p^j}\ (b'_0=1),
$$
the multiplication $f(X)\cdot g (X)$ consists of
\begin{align*}
  (f\cdot g)(1)(\ee, x)&=g(1)(f(1)(\ee ,x),f(2)(\ee, x))=
  \left(\ee +\sum_{i=0}^{n-1}a_ix^{p^i} \right)
    +\sum_{j=0}^{n-1}a'_j\left( \sum_{i=0}^{n-1}b_ix^{p^i}\right)^{p^j} \\
  &=\ee +\sum_{k=0}^{n-1}\left( a_k+\sum_{j=0}^kb_{k-j}^{p^j}a_j'\right)x^{p^k},\\
    (f\cdot g)(2)(\ee, x)&=g(2)(f(1)(\ee ,x),f(2)(\ee, x))=
    \sum_{j=0}^{n-1}b'_j\left( \sum_{i=0}^{n-1}b_ix^{p^i}\right)^{p^j}
    =\sum_{k=0}^{n-1}\left(\sum_{j=0}^{k}b_{k-j}^{p^j}b'_j \right)x^{p^k}.
\end{align*}
These equalities  give the comultiplication \eqref{eq:coprodB}.
\end{proof}

The Hopf algebra $B_*$ is related to  $K(n)_*(\BZN)$
in Section \ref{sec:isom}.

\section{The fiber product of automorphism groups of  formal group laws}
\label{sec:fiber}
In this section, we study relations among the functors
$\aut_{H_n}(-),\ \aut_{\bar{g}_a}(-)$ and $\aut_{\bar{G}_a}(-)$
in Sections \ref{sec:fgls}, \ref{sec:afgls} and \ref{sec:2fgl}, respectively.
The relations lead to  a  Hopf algebra $C_*$
which is isomorphic to $K(n)_*(K(n))$.
We construct  the Hopf algebra $C_*$ algebraically
since we use only formal group laws.

First, we examine relations between $H_n$ and $\bar{g}_a$.
The $p$-series of the Honda formal group law $H_n(x,y)$
over $\KN$ is $[p]_{H_n}(x)=v_nx^{p^n}$.
Via the quotient map
$$K(n)_*[[x,y]]\lra K(n)_*[x,y]/(x,y)^{p^n},$$
we regard $H_n$ as a formal group law chunk $\bar{H}_n$ of size $p^n-1$.
By comparison of degrees, we have
$\bar{H}_n(x,y)=\bar{g}_a(x,y)=x+y \in K(n)_*[x,y]/(x,y)^{p^n} $.
Given a $K(n)_*$-algebra $R_*$, a strict automorphism
$f(x)\in \aut_{H_n}(R_*)$ maps to
the strict automorphism $\bar{f}(x)\in \aut_{\bar{g}_a}(R_*)$
through the projection $R_*[[x]]\to R_*[x]/(x^{p^n})$.
Explicitly, it is expressed as
\begin{equation}\label{eq:Hntoga}
f(x)=\sum_{i=0}^{\If}{}^{H_n}a_i x^{p^i}\mapsto
\bar{f}(x)=\sum_{i=0}^{n-1}a_i x^{p^i},
\end{equation}
where $a_0=1$.
\begin{prop}
  \label{prop:Hntoga}

Let $R_*$ be a $K(n)_*$-algebra.  
The map \eqref{eq:Hntoga} gives
the natural transformation 
$$\alpha:\aut_{H_n}(R_*)\cong \hom_{K(n)_*\text{-alg.}}(\Sigma(n),R_*)
\lra \aut_{\bar{g}_a}(R_*)\cong \hom_{K(n)_*\text{-alg.}}(A_*,R_*)$$
of groups. Moreover $\alpha$
induces the Hopf algebra homomorphism
$$\alpha^*:A_*\cong K(n)_*[\xi_1,\dotsc,\xi_{n-1}]\lra
{\Sigma}(n)\cong K(n)_*[\bar{t}_1,\bar{t}_2,\dotsc]/
(\bar{t}_i^{p^n}-v_n^{p^i-1}\bar{t}_i) $$
which is defined by $\alpha^*(\xi_i)=\bar{t}_i$.
  
\end{prop}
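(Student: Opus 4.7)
The plan is to verify that the truncation map $\alpha$ is a well-defined natural transformation of group-valued functors, and then read off $\alpha^*$ via the Yoneda lemma together with Lemmas \ref{lem:repHn} and \ref{lem:repga}. The key geometric input, namely $\bar{H}_n=g_a$ as chunks of size $p^n-1$, has already been established in the paragraph preceding the proposition, so the bulk of the work is bookkeeping.

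First I would verify well-definedness. Given $f(x)=\sum_{i\geq 0}{}^{H_n}a_i x^{p^i}\in \aut_{H_n}(R_*)$ with $a_0=1$, apply the $R_*$-algebra quotient $q:R_*[[x,y]]\LLA R_*[x,y]/(x,y)^{p^n}$ to the identity $f(H_n(x,y))=H_n(f(x),f(y))$. Using $\bar{H}_n=g_a$, this yields $\bar{f}(g_a(x,y))=g_a(\bar{f}(x),\bar{f}(y))$; combined with $a_0=1$ it shows that $\bar{f}$ is a strict automorphism of $g_a$. Moreover, because the $H_n$-formal sum collapses to the ordinary sum after truncation, the resulting $\bar f$ has exactly the form $\sum_{i=0}^{n-1}a_ix^{p^i}$ asserted in \eqref{eq:Hntoga}.

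Next I would verify multiplicativity and naturality. Both $\aut_{H_n}(-)$ and $\aut_{g_a}(-)$ use composition of strict (auto)morphisms as product, by Definitions \ref{defi:Hn} (\ref{2.1-4}) and \ref{defi:ga} (\ref{3.4-2}). Since $q$ is a ring homomorphism carrying $(x)$ into $(x)$, and strict automorphisms vanish at $0$, truncation commutes with composition: $\overline{g\circ f}=\bar g\circ\bar f$. Naturality in $R_*$ is immediate since truncation commutes with any $K(n)_*$-algebra homomorphism applied to coefficients. Thus $\alpha$ is a morphism of group-valued functors $\mathbf{Alg}\LLA \mathbf{Gp}$.

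Finally I would extract $\alpha^*$ by Yoneda. Under Lemma \ref{lem:repHn}, the identity map of $\bar\Sigma(n)$ corresponds to the universal strict automorphism $f_{\mathrm{univ}}(x)=\sum_{i\geq 0}{}^{H_n}t_ix^{p^i}\in \aut_{H_n}(\bar\Sigma(n))$, with $t_0=1$. Applying $\alpha$ and invoking the first paragraph gives $\bar f_{\mathrm{univ}}(x)=x+\sum_{i=1}^{n-1}t_ix^{p^i}\in \aut_{g_a}(\bar\Sigma(n))$. By Lemma \ref{lem:repga}, this element corresponds to the unique $K(n)_*$-algebra homomorphism $A_*\to \bar\Sigma(n)$ sending $\xi_i\mapsto t_i$, which is therefore $\alpha^*$. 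That $\alpha^*$ respects coproducts is automatic, since the Hopf structures on $A_*$ and $\bar\Sigma(n)$ encode the group products on the corepresented functors, and we have just shown $\alpha$ respects those. The only subtle step I anticipate is keeping straight that the $H_n$-formal sum degenerates to an ordinary sum \emph{only} after truncation mod $(x,y)^{p^n}$; all other checks are formal.
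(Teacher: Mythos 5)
Your argument is correct and follows the same route as the paper: the well-definedness of the truncation $\bar f$ is exactly the content of the paragraph preceding the proposition (using $\bar H_n = g_a$), and the identification $\alpha^*(\xi_i)=t_i$ is read off from Lemmas \ref{lem:repHn} and \ref{lem:repga} just as the paper's one-line proof indicates. Your version merely spells out the compatibility with composition and the Yoneda step that the paper leaves implicit.
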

\begin{proof}
  The latter part follows from
  Theorem  \ref{lem:repHn}, Proposition \ref{lem:repga} and Yoneda's Lemma.
\end{proof}

Next, we discuss relations between $\bar{G}_a$ and $\bar{g}_a$.
\begin{prop}\label{prop:Gatoga}
  Let $R_*$ be a $K(n)_*$-algebra.  
We define the natural  homomorphism  of groups 
$$\beta:\aut_{\bar{G}_a}(R_*)\cong \hom_{K(n)_*\text{-alg.}}(B_*,R_*)\lra 
\aut_{\bar{g}_a}(R_*)\cong \hom_{K(n)_*\text{-alg.}}(A_*,R_*)$$
by
$$f(\epsilon,x)=(f(1)(\epsilon,x),f(2)(\epsilon,x))\mapsto f(2)(\ee,x). $$
This corresponds to
the  homomorphism of Hopf algebras
$$ \beta^*:A_*=K(n)_*[\xi_1,\dotsc,\xi_{n-1}] \lra
B_*=K(n)_*[\xi_1,\dotsc ,\xi_{n-1}]\otimes E(\bar{\tau}_0,\dotsc ,\bar{\tau}_{n-1})$$
which is given by $\xi_i\mapsto \xi_i$.
\end{prop}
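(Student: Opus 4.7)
The plan is to verify the proposition in three stages: first that $\beta$ is well defined as a set map, then that it is a group homomorphism, and finally that Yoneda gives the desired $\beta^{*}$ with the stated values on generators.

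For well-definedness, take $f=(f(1),f(2))\in\aut_{G_a}(R_*)$. From the expansion \eqref{eq:2autfglc}, the second component has the form $f(2)(\ee,x)=x+\sum_{i=1}^{n-1}b_ix^{p^i}$; in particular $f(2)$ is independent of $\ee$ (this is forced by the degree and parity constraints, since every monomial in $f(2)$ must have even total degree $2$). The second identity of \eqref{eq:autGa} then reduces to $f(2)(x_1+x_2)=f(2)(x_1)+f(2)(x_2)$, which is precisely the condition that $f(2)$ is an automorphism of $g_a$; combined with $f(2)\equiv x\pmod{x^2}$ from the quasi-strict hypothesis, we get $f(2)\in\aut_{g_a}(R_*)$. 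Naturality in $R_*$ is immediate from the definition.

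Next I would check that $\beta$ is a group homomorphism. For $f,g\in\aut_{G_a}(R_*)$, Definition \ref{defi:Ga2hom} (\ref{4.2-3:prod-2afgl}) gives
\[
  (g\cdot f)(2)(\ee,x)=g(2)\bigl(f(1)(\ee,x),f(2)(\ee,x)\bigr).
\]
Since $g(2)$ does not involve its first argument, the right-hand side equals $g(2)(f(2)(\ee,x))=(g(2)\circ f(2))(x)$, which under the product of Definition \ref{defi:ga} (\ref{3.4-2}) is $\beta(g)\cdot\beta(f)$. Hence $\beta$ is a natural transformation of group-valued functors.

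Finally, for the induced Hopf algebra map $\beta^{*}\colon A_{*}\to B_{*}$, I would invoke the Yoneda-type isomorphisms of Lemma \ref{lem:repga} and Proposition \ref{prop:repGa}. Under these, an algebra map $\theta\colon B_{*}\to R_{*}$ corresponds to the pair whose second coordinate is $f(2)(\ee,x)=x+\sum_{i=1}^{n-1}\theta(\xi_i)x^{p^i}$, while an algebra map $\phi\colon A_{*}\to R_{*}$ corresponds to $f(x)=x+\sum_{i=1}^{n-1}\phi(\xi_i)x^{p^i}$. The equality $\phi=\beta(\theta)$ therefore forces $\phi(\xi_i)=\theta(\xi_i)$, so $\phi=\theta\circ\beta^{*}$ with $\beta^{*}(\xi_i)=\xi_i$. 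The compatibility of $\beta^{*}$ with the coproducts is automatic from the fact that $\beta$ is a natural transformation of group-valued functors.

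There is no really hard step here; the only mild subtlety is the observation that $f(2)$ is $\ee$-free, which is what makes $g(2)\circ f$ collapse to ordinary substitution and lets everything reduce to the $1$-dimensional setting of Section \ref{sec:afgls}.
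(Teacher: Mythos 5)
Your argument is correct and follows essentially the same (much briefer) route as the paper: read off from \eqref{eq:2autfglc} that the second component is an $\ee$-free additive polynomial in the $x^{p^i}$, hence lies in $\aut_{g_a}(R_*)$ by \eqref{eq:autfglc}, and then transport through the corepresentability isomorphisms of Lemma \ref{lem:repga} and Proposition \ref{prop:repGa} to get $\beta^*(\xi_i)=\xi_i$. Two small caveats. First, your parenthetical claim that the $\ee$-freeness of $f(2)$ is ``forced by degree and parity'' is not quite right: a monomial $c\,\ee x^{k}$ with $c\in R_{2k-1}$ is homogeneous of degree $2$, and $R_*$ is permitted to have odd-degree elements (indeed $B_*$ itself does). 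What actually kills such terms is the additivity condition \eqref{eq:autGa} --- the cross terms $c\,\ee_1x_2^{k}+c\,\ee_2x_1^{k}$ do not cancel --- together with quasi-strictness for $k=0$; this is exactly what the derivation of \eqref{eq:2autfglc} encodes, and since you cite that expansion directly the proof is unaffected. Second, your composition-order bookkeeping in the homomorphism check is reversed relative to the paper's convention $f\cdot g=g\circ f$ (your displayed formula computes $(f\cdot g)(2)$, not $(g\cdot f)(2)$, and $g(2)\circ f(2)$ equals $\beta(f)\cdot\beta(g)$ under Definition \ref{defi:ga}); because the argument is symmetric in $f$ and $g$, the conclusion that $\beta$ is a group homomorphism still stands.
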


\begin{proof}
An element $f(\epsilon,x)=(f(1)(\epsilon,x),f(2)(\epsilon,x))$
of $\aut_{\bar{G}_a}(R_*)$ is expressed as  the equalities  \eqref{eq:2autfglc}.
It follows from the equality \eqref{eq:autfglc} that
$f(2)(\ee,x)\in R_*[[x]]$ is a strict automorphism
of $\bar{g}_a$.
By Proposition \ref{lem:repga} and Theorem \ref{prop:repGa},
we see that $\beta$ induces the homomorphism $\beta^*$.
\end{proof}

\begin{defi}\label{defi:fp}
  We define  the functor $C(-)$ by the fiber product of $\alpha$ and $\beta$.
  In other words,
  the group $C(R_*)$ is  the fiber product 
\begin{equation}\label{eq:fp}
\xymatrix{
  C(R_*)\ar@{.>}[r]\ar@{.>}[d] & \aut_{H_n}(R_*)\ar[d]^{\alpha}\\
  \aut_{\bar{G}_a}(R_*)\ar[r]^{\beta}& \aut_{\bar{g}_a}(R_*)
}
\end{equation}
of groups.
\end{defi}
The functor $C(-)$ is corepresentable as follows.
\begin{thm}\label{thm:fp}
  The group-valued functor $C(-)$ is corepresented by
  the Hopf algebra
  $$C_*= {\Sigma}(n)\otimes_{A_*}B_*\cong  {\Sigma}(n)\otimes_{K(n)_*}E(\bar{\tau}_0,\dotsc ,\bar{\tau}_{n-1}).$$
  That is, there exists a natural isomorphism
  $C(-)\cong \hom_{K(n)_*\text{-alg.}}(C_*,-)$.
  The comultiplication on $\bar{\tau}_i$
  is given by
  \begin{align*}
  \Delta(\bar{\tau}_i)=\bar{\tau}_i\OT 1 +\sum_{j=0}^i t_{i-j}^{p^j}\OT \bar{\tau}_j.
\end{align*}
\end{thm}
\begin{proof}
  Since $\alpha$ and $\beta$ are natural transformations
  of  groups, 
the fiber product $C(R_*)$ 
inherits the group structure from $\alpha$ and $\beta$.
The fiber product \eqref{eq:fp} implies
that $C(-)$ is  corepresented by the pushout $C_*$ of
the diagram
$$\xymatrix{
  C_* & {\Sigma}(n)_*\ar@{.>}[l]\\
  B_*\ar@{.>}[u]& A_*\ar[l]^{\beta^*}\ar[u]^{\alpha^*}  
}$$
of Hopf algebras.
In other words, there exists
a Hopf algebra $C_*$ such that
$$C(-)\cong \hom_{K(n)_*\text{-alg}}(C_*,-),$$
and $C_*$ is isomorphic to
$${\Sigma}(n)\otimes_{A_*}B_*\cong
{\Sigma}(n)\otimes_{K(n)_*}E(\bar{\tau}_0,\dotsc ,\bar{\tau}_{n-1}). $$
The comultiplication $\Delta(\bar{\tau}_i)$ derives
from the comultiplication on $B_*$ in Theorem \ref{prop:repGa}.
\end{proof}
This suggests that the Hopf algebra $K(n)_*(K(n))$ is expressed in terms
of formal group laws.
Note that $\bar{\tau}_i \in B_*$ corresponds to 
the conjugation of $\tau_i\in \KN (K(n))$ in \eqref{eq:kncoop}.

\section{Boardman's stable comodule algebras}
\label{sec:boardman}

This section is devoted to a summary of \cite{board}.
We recall  stable comodules, stable comodule algebras and multiplicative operations from  \cite{board},
and improve some of them.
In \cite{board}, 
Boardman  mainly dealt with stable cooperations
of the cohomologies
$$E=H\Bbb{Z}/2,\ H\Bbb{Z}/p,\ K(n),\ KU, \ MU,\ BP,$$
where $MU$ is the complex cobordism, and
$KU$ is the complex $K$-theory.
In these cases, the stable cooperations $E_*(E)$ is a free (left or right)  $E^*$-module and a Hopf algebroid.
If $E=H\Bbb{Z}/2,\ H\Bbb{Z}/p$ or $ K(n)$,
then $E_*(E)$ becomes a Hopf algebra.
In this section, we treat  the six cohomologies above
to clarify  differences between them,
and study $E_*(E)$ which is a Hopf algebra.

Throughout this section,
we denote  tensor products and  completed tensor products over $E_*$
by  $\OT$ and $\hat{\OT}$, respectively. 
We may write $(X,o)$
for  a space with  basepoint or a spectrum.
We introduce some categories.
We use the same notation as \cite{board}
except $\mathbf{GAb}$, $\mathbf{GAb}^*$ and $\mathbf{Alg}_E$.
See \cite[Section 6]{board} for more details.
\begin{itemize}
\item $\mathbf{Ho}$: The homotopy category of spaces
  which are homotopy equivalent to CW complexes.
  The morphisms $\mathbf{Ho}(X,Y)$ is the homotopy set $[X,Y]$.
\item $\mathbf{Stab} $:  The stable homotopy category of spectra
with maps of degree $0$ as morphisms.
The  morphisms $\mathbf{Stab}(X,Y)$ is defined by the group
of  homotopy classes $\{ X,Y \}$.
\item $\mathbf{Stab^*}$: The {\em graded} stable homotopy category.
It has the same objects as $\mathbf{Stab}$ with maps of any degree as morphisms.
We express the group of maps of degree $n$ as
$$\mathbf{Stab}^n(X,Y)=\{ X,Y\}^n=\{ X,\Sigma^n Y \}=\{ \Sigma^{-n} X,Y \},$$
where $\Sigma $ is the reduced suspension.
Let  $(S^k,o)$ be the $k$-sphere spectrum.
Fixing an isomorphism $S^1\simeq S^0$  of degree $1$ in $\mathbf{Stab}^*$,
we define the canonical natural {\em desuspension isomorphism}
\begin{equation}\label{eq:sus}
\Sigma X  =S^1\wedge X\simeq S^0\wedge X\simeq X
\end{equation}
of degree $1$ for any spectrum $X$.
\item $\mathbf{GAb}^*$: The category of {\em graded} abelian groups and
  homomorphisms of any degree.
  A homomorphism $f:A\to B$ of degree $n$ has the components
  $f^i:A^i\to B^{i+n}$.
  Given a graded abelian group $A$ and an integer $n$,
  we define the {\em $n$-fold suspension} $\Sigma^n A$ of $A$
  by shifting every element up in degree by $n$.
  That is, $(\Sigma^n A)^i$ consists of the elements $\Sigma^n x$
  for $x\in A^{i-n}$.
  Thus $\Sigma^n:A\to \Sigma^n A$
  is an isomorphism of degree $n$.
\item $\mathbf{GAb}$: The category with the same objects as $\mathbf{GAb}^*$
and homomorphisms of degree 0.  
\item $\mathbf{Mod}^*$: The category of $E^*$-modules with maps of any degree as morphisms.
  An $E^*$-module homomorphism $f:M\to N$ of degree $n$
  has components $f^i:M^i\to N^{i+n}$ which satisfy $f^{i+h}(vx)=(-1)^{nh}vf^i(x)$
  for $x\in M^i$ and $v\in E^h$.
  The group $\mathbf{Mod}^n(M,N)$ consists of all $E^*$-homomorphisms of
  degree $n$.
  The morphisms $\mathbf{Mod}^*(M,N)$ is an $E^*$-module in an obvious way.
  The suspension isomorphism $\Sigma^n: M\to \Sigma^n M$ of $E^*$-modules
  of degree $n$ satisfies $v(\Sigma^n x)=(-1)^{nh}\Sigma^n (vx)$
  for $v\in E^h$  and $x\in M$.

\item $\mathbf{FMod}^* $: The category of
complete Hausdorff {\em filtered} $E^* $-modules  and
continuous $E^* $-module homomorphisms of any degree.
An object is a complete Hausdorff $E^* $-module  with the filtration.
That is, an object $M$ has $E_*$-submodules $F^aM\subset M $,
and satisfies $M\cong \invlim_a M/F^aM $ to make the topology complete Hausdorff.
\item $\mathbf{FMod} $:  The category with
all objects of $\mathbf{FMod}^*$ and continuous $E^* $-module homomorphisms of degree 0.
\item $\mathbf{FAlg}$: The category of  complete Hausdorff commutative
   filtered $E^*$-algebras.
A {\em filtered $E^*$-algebra} $A$ has
the multiplication $\phi:A\OT A\to A$, the unit $E^*\to A$
and the {\em filtration of  ideals} $F^aA$ of $ A$.
Then $\phi$ is continuous, and
we identify $\phi$ with the completed multiplication $A\hat{\OT} A \to A$.
An object $A$ satisfies $A\cong \invlim_a A/F^aA $.
\item  $\mathbf{Alg}_E$: The full subcategory  of $\mathbf{FAlg}$
  which consists of  objects $M$ with trivial filtration $\{ 0\}\subset M$.
Any object of $\mathbf{Alg}_E$ is a discrete and complete $E^*$-module.
\end{itemize}
For a filtered $E^* $-module $M$ with $F^aM\subset M $,
we define the completion  $\WH{M}=\invlim_aM/F^aM $ of $M$.
Then $\WH{M}$ is a complete filtered $E^* $-module,
and we have the completion functor
$$(-)\hat{\ }:\mathbf{Mod}\to \mathbf{FMod}.$$
Let $M,N$ be complete Hausdorff filtered $E^* $-modules.
Since the usual tensor product $M\otimes N$
is rarely complete,
we define the completed tensor product $M\hat{\otimes} N $ in $\mathbf{FMod}$
as
$$ M\hat{\otimes}N= \invlim_{a,b}[(F/F^aM)\otimes (N/F^bN)],$$
where $F^aM,\ F^b N$ are the filtrations of $M,\ N$.
Treating any $E_*$-algebra as a  filtered algebra with trivial filtration,
we identify $\mathbf{Alg}_E$ with the category of $E^*$-algebras.
If $E=K(n)$, then $\mathbf{Alg}_{K(n)}$ coincides with  $\mathbf{Alg}$
defined in Section \ref{sec:fgls}.

The cohomology $E^*(X)$ of a CW-complex $X$ has the {\em profinite  filtration}
$$ F^aE^*(X)=\ker[E^*(X)\lra E^*(X_\alpha)],$$
where $X_a$ runs through all finite subcomplexes of  $X$. 
We call the resulting filtration topology  the {\em profinite topology}.
The completion $E^*(X)\hat{\ }$
is an object of $\mathbf{FAlg}$, and
we obtain the  cohomology functor
$$ E^*(-)\hat{\ }:\mathbf{Ho}^{op}\lra \mathbf{FAlg}.$$
Here $\mathbf{Ho}^{op}$ is the opposite category of $\mathbf{Ho}$. 
The cohomology $E^*(X,o)$ of a spectrum $(X,o)$ is an $E_*$-module
with the profinite filtration by using all finite subspectra of $X$ similarly.
The completed cohomology $E^*(X,o)\hat{\ }$ is an object of $\mathbf{FMod}$.
The homology $E_*(X)$ is a discrete $E^*$-module with trivial filtration.

Given a discrete $E^*$-module $M$, the {\em dual-finite filtration} on $DM=\hom^*(M,E^*)$ in \cite[Definition 4.8]{board} consists of the submodules
$F^LDM=\op{Ker}[DM\to DL]$, where $L $ runs through all finitely generated submodules of $M$.
It gives the {\em dual-finite topology} on $DM$
and it is complete Hausdorff.
For any finite generated submodule $L\subset E_*(X)$,
there exists a finite subspectrum $X_{\alpha}$ of $X$
such that $L\subset E_*(X_{\alpha})$.
Hence, the duality morphism $d:E^*(X)\to DE_*(X)$, which is constructed
from the Kronecker pairing $\langle -,- \rangle:E^*(X)\OT E_*(X)\to E^*$,
is continuous.
\begin{thm}[\cite{board}Theorem 4.14]
  Assume that $E_*(X)$ is a free $E^*$-module.
  Then $X$ has {\em strong duality}, that is, $d:E^*(X)\cong DE_*(X)$
  is a homeomorphism between the profinite topology on $E^*(X)$
  and the dual-finite topology on $DE_*(X)$.
  In particular, $E^*(X)$ is complete Hausdorff.
\end{thm}
We deal with the spectrum $E$ whose cooperations  $E_*(E)$ is free
as an $E_*$-bimodule.
Therefore we have the homeomorphism $E^*(E)\cong DE_*(E)$.

The fixed desuspension isomorphism \eqref{eq:sus}
induces the $E_*$-isomorphism of  degree $1$
\begin{equation}\label{eq:susE}
  \Sigma:E^*(X,o)\to E^{*+1}(\Sigma X,o).
\end{equation}
Hence the cohomology $E^*(-)$ is a functor from $\mathbf{Stab}^*$ to $\mathbf{Mod}^*$, and we have the completed cohomology functor
$$ E^*(-)\hat{\ }:(\mathbf{Stab}^*)^{op}\lra \mathbf{FMod}^*.$$
For the one point space $T$ and the unit element $1_T\in E^0(S^0,o)\cong E^0(T)\cong E^0$,
we define the canonical generator $u_1=\Sigma 1_T\in E^1(S^1,o)$.
Similarly, we set  the canonical generator
\begin{equation}\label{eq:canogen}
u_k=\Sigma^k 1_T\in E^k(S^k,o)\subset E^k(S^k).  
\end{equation}
The  ring spectrum $E$ induces  the multiplication 
$$\phi:E_*(E)\OT E_*(E)\overset{\wedge}{\lra} E_*(E\wedge E) {\lra} E_*(E),$$
where $\wedge$ is the smash product.
Thus  $E_*(E)$ is a commutative $E^*$-algebra with the left or right $E^*$-action.
Because of ambiguity of tensor products and bimodules,
we use the  notations of \cite{board} as follows.
We rewrite the multiplication $\phi$ as
$$\phi:E1_*(E2)\OT_1 E1_*(E3)\overset{\wedge}{\lra} E1_*(E2\wedge E3)\lra
E1_*(E), $$
which we call the {\em $E^*$-action scheme} of $\phi$.
Here $Ei$ is a copy of $E$ tagged for identification,
and $\OT_i$ means  $\OT_{Ei_*}$.
The algebra $E^*(E)$ of stable operations acts on the cohomology $E^*(X)$ or
$E^*(X,o)$ by composition.
The action
\begin{equation}
  \label{eq:univaction}
\lambda_X:E^*(E)\OT E^*(X)\lra E^*(X),   
\end{equation}
 is represented as
$\lambda_X:E1^*(E2)\OT_2 E2^*(X)\lra E1^*(X). $
In \cite[(10.7)]{board}, Boardman   introduced
the partial dualization \eqref{eq:BoaMil} of $\lambda_X$ as follows.
The action $\lambda_X$ induces the $E2_*$-module homomorphism
$$\rho_X':E2^*(X)\lra \mathbf{FMod}^*(E1^*(E2),E1^*(X)\hat{\ }) $$
which is defined by $\rho_X'(x)=x^*:E^*(E)\to E^*(X)\hat{\ }$ for $x\in E^*(X)$.
In terms of an $E^*$-basis $\{c_\alpha|\alpha\in \Lambda \}$ of $E_*(E)$,
we define $u\in E^*(E)\HOT E_*(E) $
by $u=\sum_\alpha (-1)^{\deg c_\alpha}c_\alpha^*\OT c_\alpha$.
Here $c_\alpha^*\in E^*(E)$ is the linear functional dual to $c_\alpha$, which is given by
the Kronecker pairing $\langle c_\alpha^*,c_\alpha \rangle=1$ and $\langle c_\alpha^*,c_\beta \rangle$
for $\alpha\ne \beta.$
By \cite[Lemma 6.16(b)]{board} and the strong duality $E^*(E)\cong DE_*(E)$,
we have
$$\mathbf{FMod}^*(E^*(E),E^*(X)\hat{\ })\cong E^*(X)\HOT E_*(E)\quad
(g \mapsto (g^* \OT 1)(u)).$$
We obtain the partial dualization of $\lambda_X$
\begin{equation}
  \label{eq:BoaMil}
\rho_X: E^*(X)\lra E^*(X)\hat{\otimes} E_*(E)  
\end{equation}
whose action scheme is
$$\rho_X:E2^*(X)\to E1^*(X)\HOT_1 E1_*(E2).$$
Note that $E_*(E)$ is a discrete $E^*$-module.
The coaction $\rho_X$ generalizes the Milnor coaction 
$\lambda^*:H^*(X)\to H^*(X)\HOT H_*(H)$ in \cite[Section 4]{mil}.
We can recover the action of $r\in E^*(E)$ on $E^*(X)$ from $\rho_X$
as
$$r:E^*(X)\xrightarrow{\rho_X}E^*(X)\HOT E_*(E)\xrightarrow{1\OT \langle r,- \rangle} E^*(X)\OT E_*\cong E^*(X).$$ 
Since $\rho_X$ preserves the right action of $E2_*$,
the completion
\begin{equation}
  \label{eq:univcoa-1}
\rho_X: E2^*(X)\hat{}\lra E1^*(X)\hat{\otimes}_1 E1_*(E2)  
\end{equation}
is a natural morphism in $\mathbf{FMod}$.
Similarly, we  define the Milnor coaction for a spectrum $(X,o)$,
which is denoted by
\begin{equation}
  \label{eq:univcoa-2}
  \rho_{(X,o)}:E2^*(X,o)\hat{}\lra E1^*(X,o)\HOT_1 E1_*(E2).
\end{equation}
In this section, some coactions are defined on both spaces and spectra.
When we define coactions, we may explain them only on either spaces or spectra.

The composition and the unit of $E^*(E)$ have the action schemes
\begin{equation*}
  E1^*(E2)\OT_2 E2^*(E3)\overset{\circ}{\lra} E1^*(E3),\quad
  \xi:E1^*\lra E1^*(E2).
\end{equation*}
By dualization,
we obtain the comultiplication and  the counit
\begin{equation}
  \label{eq:Ecoa}
  \psi:E1_*(E3)\to E1_*(E2)\OT_2 E2_*(E3),
  \quad
  \ee:E_*(E)=\pi_*(E\wedge E)\lra E_*=\pi_*(E).
\end{equation}
Since the composition $\circ$ 
is associative,
the comultiplication $\psi$ 
is coassociative.
Therefore $E_*(E)$ is a Hopf algebroid.
Interchanging the first  and the second spectrum,
we obtain the conjugation
$$\chi:E1_*(E2)\to E2_*(E1).$$

Since the cohomology $E^*(X)$ or $E^*(X,o)$
is an $E^*(E)$-module with the action $\lambda$, 
it is an $E_*(E)$-comodule
with the coaction \eqref{eq:univcoa-1} or \eqref{eq:univcoa-2}.
By \cite[Theorem 11.14]{board},
the complete cohomology $E^*(X)\hat{\ }$ with the coaction $\rho_X$ is a stable comodule in the following sense.

\begin{defi}[\cite{board}Definition 11.11]\label{defi:stacom}
  A {\em stable ($E^*$-cohomology) comodule } 
  is a complete Hausdorff filtered $E^*$-module $M$
   equipped with a  morphism 
  $$\rho_M:M\lra M\hat{\otimes}E_*(E)\qquad
  (M2\lra M1\hat{\OT}_1 E1_*(E2))$$
   that is $E^*$-linear and continuous, and
  satisfies the axioms:
  \begin{equation}\label{eq:comstacom}
  \xymatrix{
    M3\ar[r]^-{\rho_M}\ar[d]^{\rho_M}&M1\HOT_1 E1_*(E3)\ar[d]^{M\otimes \psi}\\
    M2\HOT_2 E2_*(E3)\ar[r]_-{\rho_M\OT 1}&M1\HOT_1 E1_*(E2)\HOT_2 E2_*(E3),
  }\
 \xymatrix{
    M\ar[r]^-{\rho_M}\ar[rd]_{\cong}&M\HOT E_*(E)\ar[d]^{M\otimes \epsilon}\\
    &M\HOT E_*\cong M.
 }
  \end{equation}
\end{defi}
Note that $M$ is an object of $\mathbf{FMod}$ and $\rho_M$ is a morphism in
$\mathbf{FMod}$.

\begin{thm}[\cite{board}Theorem 11.14]\label{thm:univcom}
  \begin{enumerate}
\renewcommand{\theenumi}{\arabic{enumi}}
\renewcommand{\labelenumi}{(\theenumi)}
  \item \label{6.2-1}For any space $X $ or spectrum $(X,o)$,
    a natural coaction $\rho$ in \eqref{eq:univcoa-1} or \eqref{eq:univcoa-2}
    is a morphism in $\mathbf{FMod}$.
    It makes $E^*(X)\hat{} $ or $E^*(X,o)\hat{}$ a stable comodule.
  \item\label{6.2-2}
    The natural coaction $\rho_{(X,o)}$ in \eqref{eq:univcoa-2} 
  is universal. In other words,
  given a discrete $E^*$-module $N_*$,
  any   transformation of degree $0$
  $$\theta_{(X,o)}:E^*(X,o)\hat{}\lra E^*(X,o)\hat{\otimes} N_*,$$
  which is natural between functors from $\mathbf{Stab}^*$ to $\mathbf{GAb}^*$,
  is induced from $\rho_{(X,o)}$ by a unique homomorphism
  $f:E_*(E)\to N_*$ of left $E^*$-modules.
  That is, the following diagram in $\mathbf{GAb}$ is commutative:
$$\xymatrix{
  E^*(X,o)\ar[r]^-{\rho_{(X,o)}}\ar[rd]_{\theta_{(X,o)}}
  & E^*(X,o)\HOT E_*(E)\ar[d]^{1\HOT f}\\
& E^*(X,o)\HOT N_*.
}$$
We call $\theta_{(X,o)}$ a {\em stable coaction} over $N_*$.
The set of stable coactions over $N_*$ is isomorphic to
$\hom_{E_*\text{-mod.}}(E_*(E),N_*)=\mathbf{Mod}^0(E_*(E),N_*)$,
the homomorphisms of left $E_*$-modules.
  \end{enumerate}
\end{thm}
\begin{rem}
  \begin{enumerate}
  \item 
    The categories $\mathbf{Stab}^*$ and $\mathbf{GAb}^*$ are graded.
A stable coaction  $\theta$ over $N_*$ implies that
  the following diagram is commutative:
  \begin{equation}\label{eq:Estacoact}
    \xymatrix{
      E^*(X,o)\ar[r]^-{\theta_{(X,o)}}\ar[d]^{\Sigma}_{\cong}
      & E^*(X,o)\HOT N_*\ar[d]^{\Sigma \OT 1_{N_*}}_{\cong}\\
      E^{*}(\Sigma X,o)\ar[r]^-{\theta_{(\Sigma X,o)}}&
      E^{*}(\Sigma X,o)\HOT N_*,
      }
    \end{equation}
    where 
    $\Sigma $ is the suspension isomorphism \eqref{eq:susE}. 
   The vertical maps are of degree $1$.
  \item 
    Since a stable coaction  $\theta_{(X,o)}$ has the action scheme
    $$\theta_{(X,o)}:E2^*(X,o)\hat{}\lra E1^*(X,o)\hat{\otimes}_1 N1_*,$$
the coaction  $\theta_{(X,o)}$ does not preserve the action of $E2^*$.
    Hence $\theta_{(X,o)}$ is  a morphism  in $ \mathbf{GAb}^*$.
    In general, we can not define the composition of stable coactions 
    $$(\theta_X'\OT 1)\circ \theta_X:E^*(X,o)\hat{\ }
    \lra E^{*}( X,o)\HOT N_* \HOT N_*$$
    unlike the universal  stable coaction
    $\rho_X:E^*(X,o)\hat{\ }\to E^*(X,o)\HOT E_*(E) $.
  \end{enumerate}
\end{rem}
The universal stable coaction $\rho_X:E^*(X)\hat{\ }\to E^*(X)\HOT E_*(E)$
is compatible with cross product.
In other words,  we have the commutative diagram   of \cite[(12.3)]{board}
\begin{equation}\label{eq:comalg}
  \xymatrix{
    E2^*(X){\otimes_2} E2^*(Y)\ar[r]^-{\rho_X \otimes \rho_Y}\ar[dd]_{\times}
    &(E1^*(X)\hat{\otimes_1}E1_*(E2))\otimes_2 (E3^*(Y)\hat{\otimes_3}E3_*(E2))
    \ar[d]\\
    &(E1^*(X)\hat{\otimes}_{\Bbb{Z}} E3^*(Y))\hat{\otimes}_{E1_*\OT E3_*}
    (E1_*(E2){\otimes}_2E3_*(E2))
    \ar[d]^{\times \otimes \phi}\\
    E2^*(X\times Y)\ar[r]^{\rho_{X\times Y}} &E1^*(X\times Y)\hat{\otimes_1} E1_*(E2)
,}
\end{equation}
where $\times$ is the cross product and
the unlabeled map interchanges the second and the third factors.
Roughly speaking, the commutative diagram corresponds to
the dual of the Cartan formula $E^*(E)\to E^*(E)\HOT E^*(E)$.
For $x\in E^*(X)$ and $y\in E^*(Y)$,
we consider the action \eqref{eq:univaction} of $r\in E^*(E)$ on $x\times y$.
We  have a Cartan formula
$$ r(x\times y)=\sum_{\alpha}\pm r_{\alpha}'x\times r_{\alpha}''y$$
for suitable operations $r_{\alpha}'$ and $r_{\alpha}''$.
Here $\sum_{\alpha}$ may be infinite sum.
Then the multiplication $\phi:E\wedge E\to E$ on $E$ induces
$$ \phi^*(r)=\sum_{\alpha}r_{\alpha}'\times r_{\alpha}''\in E^*(E)\HOT E^*(E)\cong E^*(E\wedge E).$$
The Cartan formula converts to the commutative diagram \eqref{eq:comalg}
via the partial dualization \eqref{eq:univcoa-1}.
See \cite[Section 12]{board} for more details.
A similar commutative diagram
holds for smash product of spectra.

In \cite{board}, Boardman  
did not explain   multiplicative operations 
$\gamma_X:E^*(X)\to E^*(X)\HOT R_* $,
which is a generalization of operations with the commutative diagram
\eqref{eq:comalg}.
We give the definition, which is similar to Definition \ref{defi:mulop0}.
Recall that $\mathbf{GAb}$ is the category of graded abelian groups with
homomorphisms of degree 0.
\begin{defi}\label{defi:mulop}
  Let $R_*$ be a discrete $E^*$-module, and
  $E^*(-)\hat{\ }$ and $E^*(-)\HOT R_*$
  the  functors from $\mathbf{Ho}$ (or $\mathbf{Stab}$) to $\mathbf{GAb}$.
  A natural morphism
  $$\gamma_X:E^*(X)\hat{}\lra E^*(X)\HOT R_*$$
  in  $\mathbf{GAb}$ is called
  a  {\em multiplicative operation} over $R_*$
  if it satisfies the following conditions:
  \begin{enumerate}
  \item 
  The  diagram is commutative:
\begin{equation*}
  \xymatrix{
     E^*(X){\otimes}_{\Bbb{Z}} E^*(Y)\ar[r]^-{\gamma_X \otimes \gamma_Y}\ar[dd]_{\times}
    &(E^*(X)\hat{\otimes}R_*)\otimes_{\Bbb{Z}} (E^*(Y)\hat{\otimes}R_*)
    \ar[d]\\
    &(E^*(X){\otimes}_{\Bbb{Z}} E^*(Y))\hat{\otimes}_{E_* \OT E_*}
    (R_*{\otimes}_{\Bbb{Z}} R_*)
    \ar[d]^{\times \otimes \phi}\\
    E^*(X\times Y)\ar[r]^{\gamma_{X\times Y}} &E^*(X\times Y)\hat{\otimes} R_*
\ ;}
\end{equation*}
\item\label{6.3-2} $\gamma_{S^k}(u_k)=u_k \OT 1$ for $k=0,1$.
  Here $u_k\in E^k(S^k,o)$ is the canonical generator of  \eqref{eq:canogen}.
\end{enumerate}
We denote the set of multiplicative operations over $R_*$
by  $\op{Op}_E (R_*)$.
\end{defi} 
\begin{rem}
  By  substituting the sphere spectrum $(S^1,o)$ for $X$ in the commutative diagram of Condition (i),  Condition (ii) implies that
  a multiplicative operation $\gamma$ is compatible
with the suspension isomorphism such  as \eqref{eq:Estacoact}.
Therefore, any multiplicative operation  $\gamma $ over $R_*$ is
a stable coaction over $R_*$.
\end{rem}
The universal coaction $\rho$ in \eqref{eq:univcoa-1} or \eqref{eq:univcoa-2}
is a multiplicative operation over $E_*(E)$
since we have the commutative diagram \eqref{eq:comalg}.
That is to say, we have $\rho\in \op{Op}_E(E_*(E))$.
For any space $X$, the functors $E^*(X)\hat{\ }$ and $E^*(X)\HOT E_*(E)$
have  cup product, and are therefore  objects of $\mathbf{FAlg}$.

\begin{defi}[\cite{board} Definition 12.7.]\label{defi:comalg}
  Let $M$ be a stable $E$-cohomology  comodule.
  We call $M$ a {\em  stable (E-cohomology) comodule algebra}
  if $M$ is an object of $\mathbf{FAlg}$ and its coaction $\rho_M$
  is a morphism in $\mathbf{FAlg}$. That is to say,
  the following diagram is commutative:
\begin{equation}\label{eq:comalgM}
  \xymatrix{
    M{\otimes} M\ar[rr]^-{\rho_M \otimes \rho_M}\ar[dd]_{\mu}
    &&(M\hat{\otimes}E_*(E))\otimes (M\hat{\otimes}E_*(E))
    \ar[d]\\
    &&(M\hat{\otimes} M)\hat{\otimes}(E_*(E){\otimes}E_*(E))
    \ar[d]^{\times \otimes \phi}\\
    M\ar[rr]^{\rho_{M}} &&M\hat{\otimes} E_*(E)
,}
\end{equation}
where $\mu$ is the multiplication on $M$ and
the unlabeled map interchanges the second and the third factors.
\end{defi}
From the commutative diagram \eqref{eq:comalg},
we obtain (\ref{6.5-1}) of the following theorem.
\begin{thm}[\cite{board} Theorem 12.8.]\label{thm:univcomalg}
  \begin{enumerate}
\renewcommand{\theenumi}{\arabic{enumi}}
\renewcommand{\labelenumi}{(\theenumi)}
  \item\label{6.5-1} For any space $X$,
    the  coaction $\rho_X$ in \eqref{eq:univcoa-1}
    is a morphism in $\mathbf{FAlg}$, and 
     makes $E^*(X)\hat{}$ a stable comodule algebra.
  \item \label{6.5-2}
    The multiplicative operation  $\rho_{(X,o)}$ in \eqref{eq:univcoa-2}   is universal.
    That is to say,  for a discrete commutative $E^*$-algebra $R_*$,
  any multiplicative operation on $\mathbf{Stab}$ over $R_*$
  $$\gamma_{(X,o)}:E^*(X,o)\hat{\ }\lra E^*(X,o)\HOT R_*$$
  is induced from 
  a unique $E^*$-algebra homomorphism $\bar{\gamma}:E_*(E)\to R_*$ as follows:
$$\xymatrix{
  E^*(X,o)\hat{\ }\ar[r]^-{\rho_{(X,o)}}\ar[rd]_{\gamma_{(X,o)}}
  & E^*(X,o)\HOT E_*(E)\ar[d]^{1\HOT \bar{\gamma}}\\
& E^*(X,o)\HOT R_*.
} $$
Consequently,  we obtain the natural isomorphism of sets
\begin{equation}\label{eq:isoset}
\op{Op}_E(R_*)\cong\hom_{E^*\text{-alg}}(E_*(E),R_*)
\quad (\gamma_{(X,o)}\mapsto \bar{\gamma}).
\end{equation}

\end{enumerate}
\end{thm}
The completed cohomology  $E^*(-)\hat{\ }$ with $\rho$
is not only a stable comodule, but also a stable comodule algebra.
The structure induces the isomorphism \eqref{eq:isoset}.
In other words, the algebra $E_*(E)$ corepresents the functor $\op{Op}_E(-)$
of multiplicative operations.

From the Remark after Theorem \ref{thm:univcom},
the composition of
stable coactions $\theta_X,\ \theta_X'$ over $R_*$
\begin{align}\label{eq:iterateE}
(\theta'_X\OT 1)\circ \theta_X:E^*(X)\hat{\ }\lra E^*(X)\HOT R_*\HOT R_*  
\end{align}
is not available.
Since the universal coaction  $\rho_X$ is a  homomorphism of $E^*$-modules,
 the completed cohomology $E^*(X)\hat{\ }$ has the  composition 
$$(\rho_X\OT 1)\circ \rho_X:E^*(X)\hat{\ }\lra E^*(X)\HOT E_*(E)\HOT E_*(E)$$
in Definition \ref{defi:stacom}.
If  $H=H\Bbb{Z}/2$ or $ H\Bbb{Z}/p$,
then any multiplicative operation
$$\gamma_X:H^*(X)\hat{\ }\lra H^*(X)\HOT R_*$$
is a homomorphism of $\Bbb{Z}/2$ or $\Bbb{Z}/p$-modules.
In these cases, we can define the composition \eqref{eq:iterateE},
which induces the multiplication on $\op{Op}_H(R_*)$ in  \cite{I,I2}.
We improve Definition \ref{defi:mulop} and Theorem \ref{thm:univcomalg}
as follows.
\begin{thm}\label{prop:groupofmulop}
Suppose that  $E=H\Bbb{Z}/{2}$, $H\Bbb{Z}/{p}$ or $K(n)$.
   \begin{enumerate}
 \renewcommand{\theenumi}{\arabic{enumi}}
 \renewcommand{\labelenumi}{(\theenumi)}
\item\label{num:6.7-1}
  For any space $X$, a multiplicative operation
  $\gamma_X:E^*(X)\to E^*(X)\HOT R_*$ 
  is a  homomorphism of $E^*$-modules.
  Hence, we have the commutative diagram
  \begin{equation}\label{eq:comalgR}
  \xymatrix{
    E^*(X){\otimes} E^*(Y)\ar[r]^-{\gamma_X \otimes \gamma_Y}\ar[dd]_{\times}
    &(E^*(X)\hat{\otimes}R_*)\otimes (E^*(Y)\hat{\otimes}R_*)
    \ar[d]\\
    &(E^*(X)\hat{\otimes} E^*(Y))\hat{\otimes}(R_*{\otimes}R_*)
    \ar[d]^{\times \otimes \phi}\\
    E^*(X\times Y)\ar[r]^{\gamma_{X\times Y}} &E^*(X\times Y)\hat{\otimes} R_*,
}
\end{equation}
which is a refinement of Condition (i) in Definition \ref{defi:mulop} in that
the tensor products  are taken over $E_*$. 
Therefore $\gamma_X$ is a morphism in $\mathbf{FAlg}$,
which is analogous to Definition \ref{defi:comalg}.
\item \label{num:6.7-2}
  For
$\gamma,\gamma'\in \op{Op}_E(R_*)$,
we define  the multiplication $\gamma\cdot \gamma'$
  by the  composition 
  \begin{equation}
  \label{eq:prod-E}
\gamma\cdot \gamma':E^*(-)\xra{\gamma'}E^*(-)\HOT R_*
\xra{\gamma\OT 1}E^*(-)\HOT R_*\HOT R_*
\xra{1\OT \phi} E^*(-)\HOT R_*.
  \end{equation}
Then the identity element
$e\in \op{Op}_E(R_*)$ satisfies $e_{(X,o)}(x)=x\OT 1$ for any $x\in E^*(X,o)$.
The inverse element $\gamma^{-1}$ is given as the composition
$$ \gamma^{-1}_{(X,o)}:E^{*}(X,o)\xra{\rho_{(X,o)}}
E^*(X,o)\HOT E_*(E)\xra{1\OT \chi}E^*(X,o)\HOT E_*(E)
\overset{1\OT \bar{\gamma}}{\lra}E^*(X,o)\HOT R_*, $$
where $\chi$ is the conjugation of $E_*(E)$ and
$\bar{\gamma}$ is an  $E^*$-algebra homomorphism   corresponding to
$\gamma $ in Theorem \ref{thm:univcomalg}.
Thus $\op{Op}_E(-)$ is a functor from $\mathbf{Alg}_E$ to $ \mathbf{Gp}$.
Furthermore,
the natural transformation
\eqref{eq:isoset}
is an isomorphism of groups.
Here  the group structure of $\hom_{E^*\text{-alg}}(E_*(E),R_*)$
derives from the comultiplication $\psi $ on $E_*(E)$.
\end{enumerate}

\end{thm}
\begin{proof}
(1)\quad    Since $E_*(E)$ is a Hopf algebra,
    the left and right $E^*$-actions on $E_*(E)$ coincide. 
    Therefore, the universal coaction $\rho_X$ has the action scheme
    $\rho_X:E1^*(X)\to E1^*(X)\HOT_1 E1_*(E1)$.
    Any multiplicative operation $\gamma_X$ is a homomorphism of
    $E^*$-modules, and the commutative diagram in Definition \ref{defi:mulop}
    yields the commutative diagram \eqref{eq:comalgR}.
    This induces that $\gamma_X$ is a morphism in $\mathbf{FAlg}$. \\
(2)\quad     
    Since $\gamma_{(X,o)},\gamma_{(X,o)}'$ are  $E^*$-module  homomorphisms,
    we can define  $\gamma\cdot \gamma'.$
  Obviously, we have $\gamma\cdot e=e\cdot \gamma =\gamma$.
  By Theorem \ref{thm:univcomalg} (\ref{6.5-2}),
  the multiplication $\gamma\cdot \gamma'$
  is expressed as
\begin{equation}\label{eq:multopE}
  \begin{aligned}
    \gamma\cdot \gamma':E^*(X,o)\hat{}\xrightarrow{\rho_{(X,o)}}
    E^*(X,o)\HOT E_*(E)&
    \xra{\rho_{(X,o)}\OT 1}E^*(X,o)\HOT E_*(E)\HOT E_*(E)
\\&    \xra{1\OT \bar{\gamma}\OT \bar{\gamma}'}
    E^*(X,o)\HOT R_*\HOT R_*
    \xra{1\OT \phi}    E^*(X,o)\HOT R_*,
  \end{aligned}
\end{equation}
where $\bar{\gamma}$ and $\bar{\gamma}'$ are $E^*$-algebra homomorphisms
    corresponding to $\gamma$ and $\gamma'$, respectively.
    By Definition \ref{defi:stacom},
    we have
    $$
    \xymatrix{
(\rho_{(X,o)}\OT 1)\circ\rho_{(X,o)}=(1\OT \psi)\circ \rho_{(X,o)}:
E^*(X,o)\hat{}\ar[r]^-{\rho_{(X,o)}} &E^*(X,o)\HOT E_*(E)
\ar@<0.5ex>[r]^-{\rho_{(X,o)}\OT 1} \ar@<-0.5ex>[r]_-{1\OT \psi} 
& E^*(X,o)\HOT E_*(E)\HOT E_*(E).
      }
      $$
  It follows from \eqref{eq:multopE} that
  the multiplication $\gamma\cdot \gamma '$ on $\op{Op}_E(R_*)$ corresponds to
  the homomorphism 
$$\hfill \overline{\gamma\cdot \gamma'}=\phi \circ(\bar{\gamma}\OT\bar{\gamma}')\circ\psi:E_*(E)\overset{\psi}{\lra}E_*(E)\OT E_*(E)
\xrightarrow{\bar{\gamma} \OT \bar{\gamma}'} R_*\OT R_*
\overset{\phi}{\lra} R_*.
$$
Therefore the isomorphism  $\op{Op}_E(R_*)\cong\hom_{E^*\text{-alg}}(E_*(E),R_*)$ in Theorem \ref{thm:univcomalg} (\ref{6.5-2})
preserves  multiplication, and it is an isomorphism of groups.
  It is easily checked that the conjugation on $E_*(E)$ induces
  the inverse element $\gamma^{-1}_X$.
\end{proof}
In the case $E=K(n)$, we see that $\op{Op}_{K(n)}(-)$ is a group-valued functor
which is corepresented by the Hopf algebra $K(n)_*(K(n))$.
It is well known 
that $K(n)_*(K(n))$ is a Hopf algebra.
In \cite[Section 14, Example $K(n)$]{board},
Boardman  illustrated it by using stable comodule algebras.

\section{Relations between  $C(-)$ and
  $\op{Op}(-)$}
  \label{sec:isom}
From now on, we set $E=K(n)$, and
abbreviate the multiplicative operations $\op{Op}_{K(n)}(-)$
of Definition \ref{defi:mulop}  to $\op{Op}(-)$.
  In this section,
  we construct a natural transformation
  $$\kappa:\op{Op}(-)\lra C(-),$$ 
  where $C(-)$ is the functor of Definition \ref{defi:fp}.
 Any $K(n)_*$-algebra $R_*$ can be treated as a complete Hausdorff
 $K(n)_*$-algebra with trivial filtration.
 Hence the category $\mathbf{Alg}$ of $K(n)^*$-algebras coincides with
 $\mathbf{Alg}_{K(n)}$.
  Both $\op{Op}(-)$ and $C(-)$ are functors  from $\mathbf{Alg}$
  to  $\mathbf{Gp}$, the category of groups.
  By Yoneda's Lemma,  Theorem \ref{thm:fp} and
  Theorem \ref{prop:groupofmulop} (\ref{num:6.7-2}),
  the transformation $\kappa$ induces  the Hopf algebra homomorphism
  $\kappa^*:C_*\to K(n)_*(K(n))$.
   Using the algebra structure of $K(n)_*(K(n))$ in \cite{W2,Y1}, 
  we see that $\kappa^*$ is an isomorphism.
  
  To obtain $\kappa$, we need three natural transformations
  $$\kappa_1:\op{Op}(-)\lra \aut_{H_n}(-),\
  \kappa_2:\op{Op}(-)\lra \aut_{\bar{g}_a}(-),\
    \kappa_3:\op{Op}(-)\lra \aut_{\bar{G}_a}(-) $$
  such that the following diagram is commutative:
\begin{equation}
\xymatrix{
  \op{Op}(-)\ar[r]^{\kappa_1}\ar[d]_{\kappa_3}\ar[rd]^{\kappa_2} & \aut_{H_n}(-)\ar[d]^{\alpha}\\
  \aut_{\bar{G}_a}(-)\ar[r]^{\beta}& \aut_{\bar{g}_a}(-).
}
\end{equation}
Let $\gamma_X:K(n)^*(X)\to K(n)^*(X)\HOT R_*$ be a multiplicative operation,
$\CP^{\If}$  the infinite complex projective space,
$\CP^{p^n-1}$ the $(p^n-1)$-dimensional complex projective space,
and $\BZN$ the $(2p^n-1)$-skeleton of the classifying space $B\Bbb{Z}/p$.
Substituting $X=\CP^{\If},\CP^{p^n-1},\BZN$ into $\gamma_X$, 
we have the natural transformations $\kappa_1, \kappa_2, \kappa_3$.

Before constituting the natural transformations,
we fix generators of $K(n)^*(\CP^{\If}),\ K(n)^*(\CPN)$ and $K(n)^*(\BZN)$.
Let $x$ be the canonical generator of $K(n)^2(\CP^{\If})$ which
satisfies $i^*(x)=u_2  \in K(n)^2(S^2,o)$. 
Here $i:S^2\to \CP^{\If}$ is the usual inclusion,
and  $u_k$ is the  generator of $K(n)^k(S^k,o)$ in \eqref{eq:canogen}.
We have
\begin{equation}
  \label{eq:KNCP}
K(n)^*(\CP^{\If})\cong K(n)^*[[x]].  
\end{equation}
The inclusions $j_1:\CPN\to \CP^{\If}$ and
$j_2:\BZN\to \CPN$ induce
the elements $j_1^*(x)\in K(n)^2(\CPN)$
and $j_2^*\circ j_1^*(x)\in K(n)^2(\BZN)$.
Using the same notation, we denote the elements by $x$.
We obtain
\begin{equation}
  \label{eq:KNCPN}
K(n)^*(\CPN)\cong K(n)^*[x]/(x^{p^n}).   
\end{equation}
The usual inclusion $i:S^1\to \BZN$
leads to the element $\ee$ of $K(n)^1(\BZN)$ such that
$i^*(\ee )=u_1\in K(n)^1(S^1,o)$, and we express the cohomology of $\BZN$ as
\begin{equation}\label{eq:KNBZN}
K(n)^*(\BZN)\cong E(\ee )\OT K(n)^*[x]/(x^{p^n}).  
\end{equation}

Firstly, we study the case   $X=\CP^{\If}$.
The multiplication $$\mu:\CP^{\infty}\times \CP^{\infty}\to \CP^{\infty}$$
induces the comultiplication on $K(n)^*(\CP^{\infty})\cong K(n)^*[[x]]$.
It is well known \cite[Theorem 3.11(b)]{RW} that the element $\mu^*(x)$ is the $p$-typical formal group law 
which  is characterized by the $p$-series $[p](x)=v_nx^{p^n}$.
This means that $\mu^*(x)$ coincides with the Honda formal group law $H_n$
over $K(n)_*$. 
\begin{prop}\label{prop:optoHn}
  Given  $\gamma_X\in \op{Op}(R_*)$ and the  generator
  $x\in K(n)^*(\CP^{\If})$ of \eqref{eq:KNCP},
  the power series
  $$\gamma_{\CP^{\If}}(x)\in K(n)^*(\CP^{\If})\HOT R_*\cong R_*[[x]]$$
  is a strict automorphism of $H_n$ over $R_*$, that is,
  $\gamma_{\CP^{\If}}(x)\in \aut_{H_n}(R_*)$.
  Moreover, this induces the natural homomorphism of groups
\begin{equation}
  \label{eq:nt-hn}
  \kappa_1:\op{Op}(R_*)\cong \hom(K(n)_*(K(n)),R_*)\lra \aut_{H_n}(R_*)
  \cong \hom({\Sigma}(n),R_*)\quad (\gamma \mapsto \gamma_{\CP^{\If}}(x)).
\end{equation}
\end{prop}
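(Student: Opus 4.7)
The plan is to exploit three pieces of structure on $\CP^{\infty}$: the basepoint inclusion, the bottom-cell map $S^2 \to \CP^{\infty}$, and the $H$-space multiplication $\mu$. Each is combined with the naturality of $\theta$ to extract, respectively, the triviality of the constant term of $f(x) := \theta_{\CP^{\infty}}(x)$, the normalization of its linear coefficient, and the automorphism identity for $H_n$. A final direct calculation then shows that $\kappa_1$ carries the composition \eqref{eq:prod-E} to composition of automorphisms, making it a map of group-valued functors.

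First, to see $f(x) \equiv x \bmod x^2$: applying naturality of $\theta$ to the basepoint inclusion $\iota: \{\ast\} \hookrightarrow \CP^{\infty}$ gives $(\iota^* \OT 1)(f(x)) = \theta_{\{\ast\}}(\iota^*(x)) = \theta_{\{\ast\}}(0) = 0$, which kills the constant term of $f$. Applying naturality to the bottom-cell inclusion $i: S^2 \to \CP^{\infty}$, for which $i^*(x) = u_2$, together with Lemma \ref{lem:uk}, yields $f(u_2) = u_2 \OT 1$ in $K(n)^*(S^2,o) \HOT R_*$; since $u_2^2 = 0$, this forces the linear coefficient of $f$ to equal $1$.

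Next, to see that $f$ is an automorphism of $H_n$, apply naturality of $\theta$ to the multiplication $\mu: \CP^{\infty} \times \CP^{\infty} \to \CP^{\infty}$, whose pullback is $\mu^*(x) = H_n(x_1, x_2)$. On the one hand this gives
$$\theta_{\CP^{\infty} \times \CP^{\infty}}\bigl(H_n(x_1, x_2)\bigr) = (\mu^* \OT 1)(f(x)) = f\bigl(H_n(x_1, x_2)\bigr).$$
On the other, since $E = K(n)$ makes $K(n)_*(K(n))$ a Hopf algebra, Proposition \ref{prop:groupofmulop}(\ref{num:6.7-1}) promotes $\theta$ to a $K(n)^*$-linear ring homomorphism. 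Using $\theta(x_i) = f(x_i)$ (by naturality along the projections $p_i$) and expanding $H_n(x_1, x_2)$ termwise as a series with $K(n)^*$-coefficients, the left-hand side equals $H_n(f(x_1), f(x_2))$. Comparing yields $f(H_n(x,y)) = H_n(f(x), f(y))$, that is, $f \in \aut_{H_n}(R_*)$.

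Finally, for the group-homomorphism claim, expand $(\theta \cdot \theta')_{\CP^{\infty}}(x)$ using \eqref{eq:prod-E}: writing $\theta'(x) = \sum a'_i x^i$ and using the ring-homomorphism property of $\theta$, one obtains $\sum a'_i f(x)^i = f' \circ f(x)$, which matches $\kappa_1(\theta) \cdot \kappa_1(\theta')$ by Definition \ref{defi:Hn}(\ref{2.1-4}). The main technical input, on which both the automorphism verification and the group-homomorphism verification rest, is Proposition \ref{prop:groupofmulop}(\ref{num:6.7-1}), which uses the Hopf-algebra structure on $K(n)_*(K(n))$ to license treating $\theta$ as a $K(n)^*$-linear ring map; without this, one cannot pull the $K(n)^*$-coefficients of $H_n$ through $\theta$. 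Everything else is routine.
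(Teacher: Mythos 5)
Your proof is correct and follows essentially the same strategy as the paper's: naturality of $\theta$ plus the multiplicativity diagram applied to the $H$-space structure of $\CP^{\infty}$ for the formal-group compatibility, Lemma \ref{lem:uk} via the bottom cell for strictness, $K(n)^*$-linearity from Proposition \ref{prop:groupofmulop} to pull coefficients through $\theta$, and a direct expansion of \eqref{eq:prod-E} to identify $\kappa_1$ with composition of power series. The only (cosmetic) difference is that you apply multiplicativity to the binary product $\mu$ and verify the automorphism identity $f(H_n(x_1,x_2))=H_n(f(x_1),f(x_2))$ directly, whereas the paper routes through the $p$-fold product and the diagonal to obtain $f([p](x))=[p](f(x))$; your version is, if anything, the more direct verification of the condition $\theta_{\CP^{\infty}}(x)\in\aut_{H_n}(R_*)$.
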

\begin{proof}
We consider the commutative diagram
\begin{equation}\label{eq:commdia1}
  \xymatrix{
    K(n)^*(\CP^{\infty})
    \ar[rr]^{\gamma_{\CP^{\infty}}}
     \ar[d]_{\mu^*}&
     &K(n)^*(\CP^{\infty})\hat{\otimes}R_*
     \ar[d]^{\mu^{*}\otimes 1}\\
     K(n)^*(\CP^{\infty} \times \CP^{\infty})
     \ar[rr]^{\gamma_{{\CP^{\If}}\times \CP^{\If}}}
     & &
     K(n)^*(\CP^{\infty}\times \CP^{\infty})\hat{\otimes}R_*
     \\
      & &(K(n)^*(\CP^{\infty}))\hat{\otimes} (K(n)^*(\CP^{\infty}))
      \hat{\otimes} (R_*\hat{\otimes} R_*) 
      \ar[u]_{\times\otimes \phi}
      \\
      K(n)^*(\CP^{\infty})\hat{\otimes}
      K(n)^*(\CP^{\infty})\ar[uu]^{\cong}_{\times}
      \ar[rr]^-{\gamma_{\CP^{\If}}\otimes \gamma_{\CP^{\If}}}
      & &(K(n)^*(\CP^{\infty})\hat{\otimes}R_*) \hat{\otimes} (K(n)^*(\CP^{\infty})\hat{\otimes}R_*).
      \ar[u]
    }
  \end{equation}
Since a multiplicative operation $\gamma_X$ is natural in $X$,  
the upper diagram  is commutative.
It follows from the commutative diagram \eqref{eq:comalgR}
in Theorem \ref{prop:groupofmulop} that the lower diagram is commutative.
Under  the upper-right vertical map
$$\mu^*\OT 1:     K(n)^*(\CP^{\infty})\hat{\otimes}R_*\cong R_*[[x]]\lra
K(n)^*(\CP^{\infty}\times \CP^{\infty})\hat{\otimes}R_*\cong
R_*[[x\times 1,1\times x]],$$
the image $(\mu^*\OT 1)(x)$ is the Honda formal group law
$H_n(x\times 1,1\times x)$ over $R_*$.
Hence the commutative diagram \eqref{eq:commdia1}
implies that $\gamma_{\CP^{\If}} (x)$ is an automorphism of $H_n$ over $R_*$.
For $\gamma_{\CP^{\If}} (x)=\sum_{i\geq 1} a_ix^i\in R_*[[x]]$,
the standard inclusion $i:S^2\to \CP^{\If}$ and the definition of $x$  induce
$$i^*\left(\gamma_{\CP^{\If}} (x)\right)=
i^*\left(\sum_{i\geq 1} a_ix^i\right)= a_1u_2.  $$
By Definition \ref{defi:mulop} (\ref{6.3-2}), we have $a_1=1$.
Therefore $\gamma_{\CP^{\If}}(x)$ is a strict automorphism of $H_n$ over $R_*$.
Assigning $\gamma_{\CP^{\If}}(x)$ to a multiplicative operation $\gamma$,
we obtain the natural transformation $\kappa_1$.
For elements $\gamma,\gamma'$ of $\op{Op}(R_*)$,
it follows from  Definition \ref{defi:Hn} (\ref{2.1-4})
and   Theorem \ref{prop:groupofmulop} (\ref{num:6.7-2}) 
that
\begin{align*}
  \kappa_1(\gamma\cdot \gamma')&=
              ((1\otimes \phi)\circ(\gamma_{CP^{\If}}\otimes 1)\circ
  \gamma_{CP^{\If}}')(x)
       =\gamma_{CP^{\If}}'(\gamma_{CP^{\If}}(x))\\
  &=\gamma_{CP^{\If}}'\circ    \gamma_{CP^{\If}}(x)=
    \gamma_{CP^{\If}}(x)\cdot \gamma_{CP^{\If}}'(x)\quad
  \in R_*[[x]]
  \cong K(n)^*(\CP^{\If})\hat{\otimes}R_*.
\end{align*}
Here $\gamma_{CP^{\If}}'(\gamma_{CP^{\If}}(x))=\gamma_{CP^{\If}}'\circ    \gamma_{CP^{\If}}(x)$ is the composition in $R_*[[x]]$, and
$\gamma_{CP^{\If}}(x)\cdot \gamma_{CP^{\If}}'(x)$ is the multiplication
on $\aut_{H_n}(R_*)$.
Therefore $\kappa_1$ is a group homomorphism.
\end{proof}

Secondly, we consider the case $X=\CP^{p^n-1}$,
whose cohomology is $K(n)^*(\CP^{p^n-1})\cong K(n)^*[x]/(x^{p^n})$.
Though $\CPN$ is not an $H$-space,
we have partial multiplications
$$\mu_k:\CP^{k}\times \CP^{p^n-1-k}\lra \CP^{p^n-1}, $$
which are defined by the restriction maps of $\mu$.
For $Y=\bigcup_{k=0}^{p^n-1} \CP^{k}\times \CP^{p^n-1-k}$,
we obtain the multiplication
$ \bar{\mu}:Y\lra \CP^{p^n-1}$.
This induces
$$\bar{\mu}^*:K(n)^*(\CPN)\cong K(n)^*[x]/(x^{p^n})\lra K(n)^*[x\times 1,1\times x]/
(x\times 1,1\times x)^{p^n}\cong K(n)^*(Y).$$
The element $\bar{\mu}^*(x)$ is homogeneous of degree 2.
By comparison of degrees, we have $\bar{\mu}^*(x)=x\times 1+1\times x$, which coincides with
the additive group law chunk $\bar{g}_a(x\times 1,1\times x)$
of size $p^n-1$.
\begin{prop}\label{prop:optoga}
  For  $\gamma_X\in \op{Op}(R_*)$ and the  generator
  $x\in K(n)^*(\CPN)\cong K(n)_*[x]/(x^{p^n})$ in \eqref{eq:KNCPN},
  the element
  $$\gamma_{\CPN}(x)\in K(n)^*(\CPN)\HOT R_*\cong R_*[x]/(x^{p^n})$$
  is a strict automorphism of $\bar{g}_a$ over $R_*$, that is,
  $\gamma_{\CPN}(x)\in \aut_{\bar{g}_a}(R_*)$.
 It induces the natural homomorphism of groups
  $$\kappa_2: \op{Op}(R_*)\cong \hom(K(n)_*(K(n)),R_*)\lra \aut_{\bar{g}_a}(R_*)
   \cong \hom(A_*,R_*)\quad (\gamma\mapsto \gamma_{\CPN}(x)).
   $$
\end{prop}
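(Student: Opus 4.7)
The plan is to mirror the proof of Proposition \ref{prop:optoHn} just given, substituting $\CPN$ and $g_a$ for $\CP^{\If}$ and $H_n$. The new wrinkle is that $\CPN$ is not an $H$-space, so in place of a genuine diagonal I would use the partial multiplication $\mu:Y\to \CPN$ introduced just above the statement, together with the identity $\mu^*(x) = x\times 1 + 1\times x = g_a(x\times 1,\,1\times x) \in K(n)^*(Y)$.

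First I would assemble the analogue of diagram \eqref{eq:commdia1}: for each $0\leq k\leq p^n-1$, stack the naturality square for $\theta$ applied to $\mu_k:\CP^{k}\times \CP^{p^n-1-k}\to \CPN$ on top of the commutative diagram \eqref{eq:comalgR} from Proposition \ref{prop:groupofmulop}(\ref{num:6.7-1}) with $X=\CP^{k},\ Y=\CP^{p^n-1-k}$. Writing $f(x)=\theta_{\CPN}(x)\in R_*[x]/(x^{p^n})$ and chasing $x$ around the resulting diagram yields
$$f(x\times 1 + 1\times x) = f(x)\times 1 + 1\times f(x)\in K(n)^*(Y)\HOT R_*,$$
which is exactly the condition that $f$ is a morphism of the additive formal group law chunk $g_a$ of size $p^n-1$ in the sense of Definition \ref{defi:ga}. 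Combined with \eqref{eq:autfglc}, this forces $f(x) = \sum_{k=0}^{n-1} a_k x^{p^k}$. The strict condition $a_0=1$ I would then obtain by pulling back along $i:S^2\hookrightarrow \CP^1\hookrightarrow \CPN$: since $u_2^{p^k}=0$ in $K(n)^*(S^2,o)$ for $k\geq 1$, naturality of $\theta$ gives $a_0 u_2 = i^*(f(x)) = \theta_{(S^2,o)}(u_2)$, and Lemma \ref{lem:uk} forces this to equal $u_2\OT 1$. Hence $\theta_{\CPN}(x)\in \aut_{g_a}(R_*)$ and Lemma \ref{lem:repga} produces $\kappa_2$.

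Finally, to verify $\kappa_2$ is a group homomorphism I would repeat the computation closing the proof of Proposition \ref{prop:optoHn}: by Proposition \ref{prop:groupofmulop}(\ref{num:6.7-2}) and Definition \ref{defi:ga}(\ref{3.4-2}),
$$\kappa_2(\theta\cdot\theta') = \phi\circ(\theta'_{\CPN}\OT 1)\circ\theta_{\CPN}(x) = \theta'_{\CPN}(\theta_{\CPN}(x)) = \theta_{\CPN}(x)\cdot \theta'_{\CPN}(x),$$
which matches the composition product on $\aut_{g_a}(R_*)$. The main obstacle is the first step: one must check that the partial multiplication $\mu$ really does detect additivity at the level of the truncation $R_*[x]/(x^{p^n})$, i.e., that the naturality square for each $\mu_k$ lands in the expected polynomial quotient of $K(n)^*(Y)$ rather than in a larger completion, which is ensured by the degree bound $p^n-1$ on $x$ in $K(n)^*(\CPN)$.
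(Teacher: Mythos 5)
Your proposal is correct and follows essentially the same route as the paper: the multiplicativity diagram applied to $\CP^{k}\times\CP^{p^n-1-k}$ and the partial multiplication $\mu$ give additivity of $\theta_{\CPN}(x)$, Lemma \ref{lem:uk} (via restriction to $S^2$) gives strictness, and the group-homomorphism check is the same computation as in Proposition \ref{prop:optoHn}. The only difference is that you spell out the truncation/naturality bookkeeping that the paper leaves implicit.
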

\begin{proof}
Since $\gamma_X$  is a multiplicative operation over $R_*$,
we have the commutative diagram 
\begin{equation*}
  \xymatrix{
        K(n)^*(\CP^{p^{n-1}})
    \ar[rr]^{\gamma_{\CP^{p^{n-1}}}}
     \ar[d]_{\mu_k^*}
    & &K(n)^*(\CP^{p^{n-1}})\hat{\otimes}R_* \ar[d]^{\mu_k^{*}\otimes 1} 
     \\
         K(n)^*(\CP^{k}\times \CP^{p^n-1-k})
         \ar[rr]^{\gamma_{\CP^k\times\CP^{p^n-1-k}}}  &   & 
         K(n)^*(\CP^{k}\times \CP^{p^n-1-k})\hat{\otimes}R_*
         \\
      && (K(n)^*(\CP^k)\hat{\otimes}K(n)^*(\CP^{p^n-1-k}))
      {\HOT} (R_*\hat{\otimes}R_*)
      \ar[u]_{\times\OT\phi}
      \\
      K(n)^*(\CP^{k}){\otimes}
      K(n)^*(\CP^{p^n-1-k})\ar[uu]_{\times}^{\cong}
               \ar[rr]^-{\gamma_{\CP^k}\otimes \gamma_{\CP^{p^n-1-k}}}
      &&(K(n)^*(\CP^k)\hat{\otimes}R_*)
      {\otimes} (K(n)^*(\CP^{p^n-1-k})\hat{\otimes}R_*)
     \ar[u]
    }
  \end{equation*}
  The diagram induces
  \begin{align*}
    \gamma_{\CP^k\times\CP^{p^n-1-k}}(x\times 1+1\times x)
     &=\gamma_{\CP^k}(x)\times 1+1\times \gamma_{\CP^{p^n-1-k}}(x).
  \end{align*}
  Using the multiplication $\bar{\mu}$, we see
  $$\gamma_{Y}(x\times 1 +1\times x)
    =\gamma_{\CPN}(x)\times 1+1\times \gamma_{\CPN}(x).
   $$
   By the equality, Definition \ref{defi:mulop} (\ref{6.3-2}) and
   the definition  of $x$, we have $\gamma_{\CPN}(x)\in \aut_{\bar{g}_a}(R_*)$.
  In a similar way to the proof of Proposition \ref{prop:optoHn},
  we see that $\kappa_2$ is a group homomorphism.
\end{proof}

Thirdly, we investigate the case  $X=\BZN$.
In a similar way to $X=\CPN$, we obtain the partial multiplications
$$m_k:\BZ^{[k+1]}\times \BZ^{[2p^n-k-2]}\lra X=\BZN. $$
For $Z=\bigcup_{k=0}^{2p^n-2}\BZ^{[k+1]}\times \BZ^{[2p^n-k-2]}$,
the multiplication $m:Z\to \BZN$ induces
$$m^*:E(\ee )\OT K(n)^*[x]/(x^{p^n})\lra E(\ee\times 1,1\times \ee )\OT
K(n)^*[x\times 1,1\times x]/(x\times 1,1\times x)^{p^n}
.$$
By comparison of degrees, we have $m^*(x)=1\times x+x\times 1$, and
$m^*(\ee )=\ee\times 1+1\times \ee  +\lambda v_n\ee x^{p^n-1}$
for some $\lambda\in \FP $. 
Since $B\Bbb{Z}/p$ is a homotopy associative $H$-space,
the pair $(m^*(\ee ),m^*(x))$ is a graded 2-dimensional formal group law chunk
in Definition \ref{defi:Ga2}(\ref{defi:Ga2-1}).
By associativity, we can easily check  $m^*(\ee )=\ee\times 1+1\times \ee $.
Hence the formal group law chunk  $(m^*(\ee ),m^*(x))$ coincides
with $\bar{G}_a$
defined in Section \ref{sec:2fgl}.
We can prove the following proposition in a similar way to Proposition
\ref{prop:optoga}.

\begin{prop}\label{prop:optoGa}
  Let $\gamma_X\in \op{Op}(R_*)$ be a multiplicative operation over $R_*$,
  and $\ee$ and $ x$ the generators of  $K(n)^*(\BZN)$ in \eqref{eq:KNBZN}.
  The pair
  $$\bm{\gamma}(\ee,x)=(\gamma_{\BZN}(\ee),\gamma_{\BZN}(x))$$
  of elements of $K(n)^*(\BZN)\HOT R_*\cong E(\ee)\OT R_*[x]/(x^{p^n})$
  is a quasi-strict automorphism of $\bar{G}_a$ over $R_*$, that is,
  $\bm{\gamma}(\ee ,x) \in \aut_{\bar{G}_a}(R_*)$.
  Assigning $\bm{\gamma}$ to ${\gamma}_X \in \op{Op}(R_*)$,
  we have the natural homomorphism of groups
\begin{equation}
  \label{eq:eq:nt-Ga}
  \kappa_3:\op{Op}(R_*)\cong \hom(K(n)_*(K(n)),R_*)\lra \aut_{\bar{G}_a}(R_*)
  \cong \hom(B_*,R_*).
\end{equation}

\end{prop}
\begin{proof}
  In a similar way to Proposition \ref{prop:optoga},
  the pair $\gamma(\ee,x)$ satisfies 
  \begin{align*}
    &\gamma_Z(\ee \times 1+1\times \ee )=\gamma_{\BZN}(\ee )\times 1+ 1\times \gamma_{\BZN}(\ee)\\
    &\gamma_Z(x\times 1+1\times x)=\gamma_{\BZN}(x)\times 1+ 1\times \gamma_{\BZN}(x).
  \end{align*}
  That is, $\bm{\gamma}(\ee ,x)$ is an automorphism of $\bar{G}_a$.
  It follows from Definition \ref{defi:mulop} (\ref{6.3-2})
  and the definitions of $\ee$ and $x$
  that $\bm{\gamma}(\ee ,x)$ is a quasi-strict automorphism of $\bar{G}_a$.
\end{proof}

Lastly, we construct a Hopf algebra homomorphism $\kappa^*:C_*\to K(n)_*(K(n))$,
and show that $\kappa^*$ is an isomorphism.
\begin{thm}
  Let $C(-)$ be the functor of Definition \ref{defi:fp}.
  The natural transformations $\kappa_1,\kappa_2,\kappa_3$ induce
  the natural homomorphism of groups
$$\kappa:\op{Op}(-)\cong \hom_{K(n)\text{-alg}}(K(n)_*(K(n)),-)
\lra C(-)\cong \hom_{K(n)\text{-alg}}(C_*,-).$$
Therefore $\kappa$ yields
the corresponding homomorphism of Hopf algebras 
$$\kappa^*:C_*\lra K(n)_*(K(n)).$$

\end{thm}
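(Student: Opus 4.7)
The plan is to construct $\kappa$ by appealing to the universal property of the fiber product in Definition \ref{defi:fp}. To produce a natural transformation $\op{Op}(-)\to C(-)$, it is enough, by that universal property, to specify natural transformations $\op{Op}(-)\to\aut_{H_n}(-)$ and $\op{Op}(-)\to\aut_{G_a}(-)$ (namely $\kappa_1$ and $\kappa_3$) which become equal after composing with $\alpha$ and $\beta$ respectively. So the whole content of the theorem reduces to verifying the identity
\[
\alpha\circ\kappa_1 \;=\; \kappa_2 \;=\; \beta\circ\kappa_3
\]
of natural transformations $\op{Op}(-)\to\aut_{g_a}(-)$.

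For the left-hand equality I would fix $\theta\in\op{Op}(R_*)$, write $\kappa_1(\theta)=\theta_{\CP^{\If}}(x)=\sum_{i\ge 0}a_i x^{p^i}$, and exploit naturality of $\theta$ with respect to the inclusion $j_1\colon\CPN\hookrightarrow\CP^{\If}$. Because the generators of $K(n)^*(\CP^{\If})$ and $K(n)^*(\CPN)$ are chosen compatibly so that $j_1^*(x)=x$, naturality gives $\theta_{\CPN}(x)\equiv\sum_{i=0}^{n-1}a_ix^{p^i}\pmod{x^{p^n}}$, and the right-hand side is exactly $\alpha(\kappa_1(\theta))$ by the description of $\alpha$ in \eqref{eq:Hntoga}. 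The right-hand equality is analogous: $\beta(\kappa_3(\theta))$ is, by Proposition \ref{prop:Gatoga}, the second component $\theta_{\BZN}(x)$, and naturality of $\theta$ along $j_2\colon\BZN\hookrightarrow\CPN$ (which also sends $x$ to $x$) identifies this polynomial with $\theta_{\CPN}(x)=\kappa_2(\theta)$.

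Having established these commutativities, the universal property of the fiber product of groups applied objectwise in $R_*$ produces a unique group homomorphism $\kappa(R_*)\colon\op{Op}(R_*)\to C(R_*)$ compatible with $\kappa_1,\kappa_2,\kappa_3$. Naturality in $R_*$ is inherited from that of $\kappa_1$ and $\kappa_3$ together with the functoriality of the fiber product, giving the natural transformation $\kappa$ of group-valued functors from $\mathbf{Alg}$ to $\mathbf{Gp}$. Finally, since $\op{Op}(-)$ is corepresented by the Hopf algebra $K(n)_*(K(n))$ (Proposition \ref{prop:groupofmulop}) and $C(-)$ is corepresented by $C_*$ (Theorem \ref{thm:fp}), the Yoneda lemma converts $\kappa$ into a $K(n)_*$-algebra homomorphism $\kappa^*\colon C_*\to K(n)_*(K(n))$; compatibility of $\kappa$ with the group structures (coproducts) upgrades it to the asserted Hopf algebra homomorphism.

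The only step with any subtlety is the second displayed identity: one must make sure that the same symbol $x$ really is being used on $\BZN$ and on $\CPN$ via the inclusion $j_2$, and that $\beta$ genuinely drops the $\epsilon$-component — both of which follow directly from the conventions fixed just before Proposition \ref{prop:optoHn} and from Proposition \ref{prop:Gatoga}. There is no real obstacle here; the theorem is essentially a formal consequence of the naturality of multiplicative operations under the inclusions $\BZN\hookrightarrow\CPN\hookrightarrow\CP^{\If}$, plus Yoneda.
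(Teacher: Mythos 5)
Your proposal is correct and follows essentially the same route as the paper: verify $\alpha\circ\kappa_1=\kappa_2=\beta\circ\kappa_3$ via naturality of $\theta$ along the inclusions $j_2\colon\BZN\hookrightarrow\CPN$ and $j_1\colon\CPN\hookrightarrow\CP^{\If}$ (using that the chosen generators satisfy $j_1^*(x)=x$ and $j_2^*(x)=x$), then invoke the universal property of the fiber product and corepresentability/Yoneda to obtain $\kappa$ and $\kappa^*$. Your write-up is somewhat more explicit than the paper's, but the content is the same.
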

\begin{proof}
It follows from the surjection
$$j_1^*:K(n)^*(\CP^{\If})\cong K(n)^*[[x]]\to K(n)_*(\CPN)\cong
K(n)^*[x]/(x^{p^n})\quad (x\mapsto x),$$
Proposition \ref{prop:Hntoga} and the map \eqref{eq:Hntoga} that
we have $\kappa_2=\alpha\circ\kappa_1$.
Via the injection
$$j_2^*:K(n)^*(\CPN)\cong K(n)^*[x]/(x^{p^n})\to K(n)^*(\BZN)\cong
E(u)\OT K(n)^*[x]/(x^{p^n}) \quad (x\mapsto x),
$$
we have
$\gamma_{\CPN}(x)=\gamma_{\BZN}(x) \in K(n)^*[x]/(x^{p^n})$.
By Proposition \ref{prop:Gatoga}, we obtain
 $\kappa_2=\beta\circ\kappa_3$.
The commutative diagram  
\begin{equation*}
  \label{eq:kappa}
\xymatrix{
  \op{Op}(R_*) \ar[rrd]^{\kappa_1}\ar[rdd]_{\kappa_3}\ar@{.>}[rd]_{\kappa} & & \\
  &C(R_*)\ar[r]\ar[d] & \aut_{H_n}(R_*)\ar[d]^{\alpha}\\
  &\aut_{\bar{G}_a}(R_*)\ar[r]^{\beta}& \aut_{\bar{g}_a}(R_*).
}  
\end{equation*}
induces the natural transformation $\kappa$ and the corresponding
 homomorphism $\kappa^*$.
\end{proof}
We recall  the following theorem, which
is due to W\"urgler \cite{W2} and Yagita \cite{Y1}.
\begin{thm}\cite[Theorem 14.32 (a)]{board}\label{thm:yagita}
  We express  $\rho_{\CP^{\If}}(x)$ and
  $\rho_{\BZN}(\ee )$ as
$$\rho_{\CP^{\If}}(x)=\sum_{i=1}^{\infty} x^i\otimes b_i,\
\rho_{\BZN}(\ee )=\sum_{i=0}^{p^n-1} x^i\otimes a_i+
\sum_{i=0}^{p^n-1} \ee x^i\otimes c_i,
$$
respectively.
The commutative $K(n)_*$-algebra $K(n)_*(K(n))$ has the generators
$a_{p^i}$ for $0\leq i<n$ and $b_{p^i}$ for $i>0$ subject the relations
$b_{p^i}^{p^n}=v_n^{p^i-1}b_{p^i}$.
\end{thm}
 The generators $a_{p^i}$ and $b_{p^i}$ correspond to 
 $\bar{t}_i$ and $\bar{\tau}_i$, the conjugation elements of
  $t_i,\ \tau_i\in K(n)_*(K(n))$.
  W\"urgler and Yagita  determined the algebra structure of $K(n)_*(K(n))$,
 and we obtain the algebra $C_*$ in Theorem \ref{thm:fp}.
 Moreover, we investigate the generators of $C_*$ in Theorems \ref{lem:repHn} and \ref{prop:repGa}. 
Theorem \ref{thm:yagita} implies the following main theorem.
\begin{thm}
  $\kappa^*:C_*\to K(n)_*(K(n))$ is an isomorphism of  Hopf algebras.
\end{thm}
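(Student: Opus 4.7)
The plan is to combine three ingredients: (a) an explicit identification of $\kappa^*$ on algebra generators, (b) Yagita's Theorem \ref{thm:yagita} for surjectivity, and (c) a Poincar\'e series comparison to upgrade surjectivity to an isomorphism.

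First, I would evaluate the three natural transformations $\kappa_1, \kappa_2, \kappa_3$ on the universal multiplicative operation $\rho$, i.e.\ the element of $\op{Op}(K(n)_*(K(n)))$ corresponding to $\mathrm{id}$ under Theorem \ref{thm:univcomalg}. By Propositions \ref{prop:optoHn} and \ref{prop:optoGa}, $\kappa_1(\rho)$ is the strict automorphism $\rho_{\CP^{\If}}(x)$ of $H_n$ and $\kappa_3(\rho)$ is the quasi-strict automorphism $(\rho_{\BZN}(u), \rho_{\BZN}(x))$ of $G_a$. Matching these with the universal presentations in Lemma \ref{lem:repHn} and Proposition \ref{prop:repGa} one reads off, modulo decomposables, $\kappa^*(t_i) = b_{p^i}$ (the coefficient of $x^{p^i}$ in $b(x) = \sum b_i x^i$, up to the $H_n$-formal-sum correction) and $\kappa^*(\tau_i) = a_{p^i}$ (the coefficient of $x^{p^i}$ in the $\ee$-part of $\rho_{\BZN}(u)$, cf.\ the expansion in Theorem \ref{thm:yagita}).

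Since Yagita's Theorem \ref{thm:yagita} states exactly that $\{b_{p^i}\}_{i > 0} \cup \{a_{p^i}\}_{0 \leq i < n}$ generates $K(n)_*(K(n))$ as a $K(n)_*$-algebra, step (a) gives surjectivity of $\kappa^*$, as already noted in the paper immediately before the theorem. For injectivity I would compare Poincar\'e series. By Theorem \ref{thm:fp}, $C_* \cong \bar{\Sigma}(n) \otimes_{K(n)_*} E(\tau_0, \dots, \tau_{n-1})$, and by \eqref{eq:kncoop}, $K(n)_*(K(n)) \cong \Sigma(n) \otimes_{K(n)_*} E(\tau_0, \dots, \tau_{n-1})$. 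Because $\bar{\Sigma}(n)$ is the conjugation of $\Sigma(n)$, the two share the same underlying graded $K(n)_*$-module structure; hence both sides are free graded $K(n)_*$-modules with identical Poincar\'e series, finite in each degree. A surjection between graded free modules of equal finite rank in each degree is forced to be an isomorphism. Combined with the fact that $\kappa^*$ is a Hopf algebra morphism by naturality of $\kappa$ with respect to the fiber product diagram \eqref{eq:fp}, this yields the claimed Hopf algebra isomorphism.

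The main obstacle is that the argument is not self-contained, as the author emphasizes: both the surjectivity input (Yagita) and the Poincar\'e-series match (Ravenel--W\"urgler--Yagita) are external facts. The categorical/fiber-product machinery of Sections \ref{sec:fgls}--\ref{sec:isom} supplies $\kappa^*$ with Hopf-algebra compatibility for free, so no additional coherence check is required there. The only genuinely delicate point is the identification in step (a), which must be carried out by tracing the universal coaction through the defining formulas of $\kappa_1$ and $\kappa_3$; once this is in place, the remaining arguments are formal.
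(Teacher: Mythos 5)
Your overall strategy coincides with the paper's: surjectivity from Yagita's Theorem \ref{thm:yagita}, Hopf-algebra compatibility for free from the naturality of $\kappa$, and the remaining input taken from the known structure of $K(n)_*(K(n))$ (the paper itself offers no more than this, deferring to \cite{R,W1,W2,W3,Y2,Y1}). Your step (a), reading off $\kappa^*(t_i)$ and $\kappa^*(\tau_j)$ from $\rho_{\CP^{\If}}(x)$ and $\rho_{\BZN}(u)$ via Lemma \ref{lem:repHn} and Proposition \ref{prop:repGa}, is also the right computation. The gap is in your injectivity step. The graded pieces of $\Sigma(n)$, hence of $C_*$ and of $K(n)_*(K(n))$, are \emph{not} finite: over the graded field $K(n)_*=\FP[v_n^{\pm1}]$ these objects are free of countably infinite rank in each congruence class of degrees modulo $2(p^n-1)$, since arbitrarily long monomials $t_1^{e_1}\cdots t_k^{e_k}$ can be placed in any fixed degree after multiplying by a suitable power of $v_n^{\pm1}$; equivalently, $\dim_{\FP}$ of each nonzero graded piece is infinite. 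A surjection between graded free modules of equal \emph{infinite} rank in each degree need not be injective, so the Poincar\'e-series comparison does not close the argument.

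What does close it --- and what the paper's citations are implicitly supplying --- is that $K(n)_*(K(n))$ is known to be \emph{presented} as a graded-commutative $K(n)_*$-algebra by generators $t_i$ $(i\geq 1)$ and $\tau_j$ $(0\leq j<n)$ subject only to $t_i^{p^n}=v_n^{p^i-1}t_i$ and the exterior relations, i.e.\ by exactly the relations defining $C_*$, together with the fact (from your step (a)) that $\kappa^*$ carries the generators of $C_*$ to the standard generators of the target up to decomposables and units. From this one concludes injectivity either by a triangularity argument on the monomial bases, or by exhibiting both sides as colimits of the subalgebras generated by $t_1,\dots,t_m$ and the $\tau_j$, each of which is a free $K(n)_*$-module of finite rank $2^n p^{nm}$, where the rank count you want to use does apply. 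In short: replace the global Poincar\'e-series comparison by a presentation-matching (or finite-stage colimit) argument; the rest of your proposal stands.
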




\begin{thebibliography}{99}
\bibitem{adams}
  J.\ F.\ Adams,
\newblock{Stable homotopy and generalised homology.}
{Chicago Lectures in Mathematics. University of Chicago Press, Chicago, Ill.-London, 1974. x+373 pp.}
\bibitem{B}A.\ Baker,
\newblock{$A_{\If}$ structures on some spectra related to Morava $K$-theories.}
{Quart. J. Math. Oxford Ser. (2) 42 (1991), no. 168, 403–419. }
\bibitem{BW2}A.\ Baker  and U.\ W\"{u}rgler,
  \newblock{Liftings of formal groups and the Artinian completion
    of $v_n^{-1} BP$.}
Math. Proc. Cambridge Philos. Soc. 106 (1989), no. 3, 511–530. 
\bibitem{BW}A.\ Baker  and U.\ W\"{u}rgler,
  \newblock{Bockstein operations in Morava $K$-theories.}
  { Forum Math. 3 (1991), no. 6, 543–560. }
  \bibitem{board}
    J.\  M.\ Boardman,  
    \newblock{Stable operations in generalized cohomology.}
    \newblock{Handbook of algebraic topology, 585-686, North-Holland, Amsterdam, 1995.}
  \bibitem{C}
    H.\  Cartan,
    \newblock{Sur les groupes d'Eilenberg-Mac Lane. II.},
    Proc. Nat. Acad. Sci. U.S.A. 40 (1954), 704–707. 
  \bibitem{H}
    M.\  Hazewinkel,
    \newblock{Formal groups and applications.}
    Pure and Applied Mathematics, 78. Academic Press, Inc. [Harcourt Brace Jovanovich, Publishers], New York-London, 1978. xxii+573pp.
  \bibitem{Honda}
    T.\ Honda,
    \newblock{On the theory of commutative formal groups.}
    J. Math. Soc. Japan 22 (1970), 213–246.
  \bibitem{hop}
    M.\ Hopkins,
    \newblock{Complex oriented cohomology theories and the language of stacks,}
    Course notes for 18.917 at MIT, August 13,1999. Available from
    \url{https://www.semanticscholar.org/paper/COMPLEX-ORIENTED-COHOMOLOGY-THEORIES-AND-THE-OF-FOR/d213545f8c1debace8fb642925ca32516276d56f}
  \bibitem{I}
    M.\ Inoue,
    \newblock{ The Steenrod algebra and the automorphism group of additive formal group law.}
    J. Math. Kyoto Univ. 45 (2005), no. 1, 39–-55.
  \bibitem{I2}
     M.\ Inoue,
    {\newblock Odd primary Steenrod algebra, additive formal group laws, and modular invariants.}
    J. Math. Soc. Japan 58 (2006), no. 2, pp. 311-–332.
  \bibitem{L}
    L.\  Lomonaco,
    \newblock{The iterated total squaring operation.}
    { Proc. Amer. Math. Soc. 115 (1992), no. 4, 1149-–1155.}
      \bibitem{mil}
        J.\ Milnor,
        \newblock{The Steenrod algebra and its dual.}
        {Ann. of Math. (2) 67 (1958), 150–-171. }
      \bibitem{Mui1}H.\ M\`{u}i,
        \newblock{Modular invariant theory and cohomology algebras of symmetric groups.}
         J. Fac. Sci. Univ. Tokyo Sect. IA Math. 22 (1975), no. 3, 319--369.
       \bibitem{Mui2}H.\ M\`{u}i,
         \newblock{Dickson invariants and the Milnor basis of the Steenrod algebra,
           Topology and applications.}
         Colloq. Math. Soc. J´anos Bolyai 41 (1983), 345–-355.

  \bibitem{R}
    D.\ C.\ Ravenel,     
{Complex cobordism and stable homotopy groups of spheres,}
Pure and Applied Mathematics, 121. Academic Press, Inc., Orlando, FL, 1986. xx+413

\bibitem{RW}
  D.\ C.\ Ravenel,    W.\ S.\ Wilson,
  \newblock{The Hopf ring for complex cobordism.}
  {J. Pure Appl. Algebra9(1976/77/1977), no.3, 241–-280.}
\bibitem{serre}
     J.\ P.\ Serre,
     \newblock{Cohomologie modulo 2 des complexes d'Eilenberg-MacLane.}
     {  Comment. Math. Helv. 27 (1953), 198–-232.}
  \bibitem{steenrod}
    N.\ E.\ Steenrod and D.\ B.\ A.\ Epstein,
    \newblock{Cohomology Operations,}
    {Annals of Math.\ Studies No.\ 50,
     Princeton University Press, 1962}
 \bibitem{W2}
   U.\ W\"{u}rgler,
   \newblock{On the relation of Morava K-theories to Brown-Peterson homology. Topology and algebra,}
   {Monograph. Enseign. Math., 26, Univ. Genève, Geneva, 1978,269--280}
 \bibitem{W3}
   U.\ W\"{u}rgler,
   \newblock{Morava K-theories: a survey.}
   {Algebraic topology Poznań 1989, 111–138, Lecture Notes in Math., 1474, Springer, Berlin, 1991.}
 \bibitem{Y2}N.\ Yagita,
   \newblock{The exact functor theorem for $BP_* /I_n$-theory.}
    Proc. Japan Acad. 52 (1976), no. 1, 1–-3. 
  \bibitem{Y1}N.\ Yagita,
    \newblock{A topological note on the Adams spectral sequence based on Morava's K-theory.}
    Proc. Amer. Math. Soc. 72 (1978), no. 3, 613–-617.
    
  \bibitem{yam}
    A.\ Yamaguchi,
    \newblock{On excess filtration on the Steenrod algebra.}
    {Proceedings of the Nishida Fest (Kinosaki 2003), 423-–449, Geom. Topol. Monogr., 10, Geom. Topol. Publ., Coventry, 2007.}
  \end{thebibliography}
\end{document}